\newcommand{\C}{{\mathbb C}}                   
\newcommand{\NN}{{\mathbb N}}                   
\newcommand{\ZZ}{{\mathbb Z}}                   
\newcommand{\QQ}{{\mathbb Q}}                   
\newcommand{\CC}{{\mathbb C}}                   
\newcommand{\CP}[1]{\mathbb{P}^{#1}}      
\DeclareMathOperator{\Spec}{Spec}
\DeclareMathOperator{\vdim}{vdim}
\newtheorem{theorem}{Theorem}[section]
\newtheorem{proposition}[theorem]{Proposition}
\newtheorem{corollary}[theorem]{Corollary}
\newtheorem{lemma}[theorem]{Lemma}
\newtheorem{defn}[theorem]{Definition}
\theoremstyle{definition}
\newtheorem{definition}[theorem]{Definition}
\theoremstyle{remark}
\newtheorem{remark}[theorem]{Remark}
\newtheorem{example}[theorem]{Example}
\title{The quantum D-module of product varieties}
\author{\'Ad\'am Gyenge}
\address{Department of Algebra and Geometry, Institute of Mathematics,  
Budapest University of Technology and Economics, 
M\H{u}egyetem rakpart 3, 1111, 
Budapest, Hungary}
\email{Gyenge.Adam@ttk.bme.hu}
\subjclass[2020]{Primary 14N35; Secondary 53D45, 14E05}
\keywords{Quantum connection, spectrum, product, birational geometry, D-module}
\begin{document}

\begin{abstract}
We study the quantum connection of product varieties in the framework of quantum cohomology. Our first main result shows that, near the origin of the Novikov variables, the quantum spectrum of \(X \times Y\) converges to the set of pairwise sums of the spectra of \(X\) and \(Y\). This arises from the leading contribution of the connection matrices \(K_X \otimes \mathrm{id}\) and \(\mathrm{id} \otimes K_Y\), while mixed curve classes contribute only at higher order. Our second main result establishes a formal isomorphism of quantum \(D\)-modules $
\mathrm{QDM}(X \times Y)^{\mathrm{la}} \cong \mathrm{QDM}(X)^{\mathrm{la}} \otimes \mathrm{QDM}(Y)^{\mathrm{la}}$,
compatible with the quantum connection. As applications, we show that atoms, birational invariants arising from quantum cohomology, factor multiplicatively for product varieties, and we deduce the existence of a motivic measure associated with atoms.
\end{abstract}

\maketitle

\tableofcontents 

\section{Introduction}

Quantum cohomology has emerged as an important tool in modern algebraic geometry, symplectic geometry, and mathematical physics. It enriches classical cohomology by incorporating enumerative information about rational curves in a smooth projective variety \(X\). More precisely, the quantum cohomology ring \(QH^*(X)\) is a deformation of the classical cohomology ring, whose structure constants encode genus-zero Gromov--Witten invariants. These invariants count, in a virtual sense, the number of rational curves in \(X\) passing through prescribed cycles and subject to homology constraints. Quantum cohomology plays a crucial role in mirror symmetry and the study of Frobenius manifolds.

A fundamental object associated with quantum cohomology is the \emph{quantum connection}, also known as the Dubrovin connection. This is a flat connection on the trivial vector bundle over the space of Novikov variables, whose connection matrices encode quantum multiplication by the Euler vector field and by elements of the cohomology of \(X\). The spectrum of the quantum connection (shortly, the \emph{quantum spectrum}) is particularly significant: it governs the asymptotic behavior of flat sections, encodes semisimplicity of the quantum cohomology, and appears in various enumerative and integrable contexts. We apply asymptotic analysis to understand this spectrum by considering degenerating limits of the Novikov variables.

The first main result of this paper concerns the quantum connection of a product variety \(X \times Y\). We show that, in the neighborhood of the origin of the Novikov variables, the eigenvalues of the quantum connection for \(X \times Y\) are given by sums of the eigenvalues of the quantum connections for \(X\) and for \(Y\). 

\begin{theorem}[{Theorem~\ref{thm:main1_eiglim}}]
Let 
\[
\lambda_0 (q_1,t_1), \dots, \lambda_{p} (q_1,t_1)
\] 
denote the quantum spectrum of $X$, and 
\[
\mu_0 (q_2,t_2), \dots, \mu_{r} (q_2,t_2)
\] 
the quantum spectrum of $Y$. Let 
\[
U \subset \operatorname{Spec} (\mathbb{C}[q])
\] 
be a conical open subset chosen as in Section~\ref{subsec:Kstr}.
The quantum spectrum  of ${X \times Y}$ converges to the set 
\[
    \{ \lambda_i (q_1, t_1) + \mu_j (q_2, t_2) \;|\; 0\leq i \leq p, \, 0 \leq j \leq r \}\]
as  $|t| \ll \infty$ and $q$ (and hence also $q_1$ and $q_2$) converges to $\vec{0}$ in $U$. 
\end{theorem}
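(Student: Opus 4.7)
The plan is to exhibit the quantum connection matrix of $X\times Y$ as a Kronecker-sum of the connection matrices of the factors plus a correction that vanishes in the prescribed conical limit, and then invoke continuous dependence of eigenvalues on matrix entries. I would first fix a K\"unneth basis $\{e_a\otimes f_b\}$ of $H^*(X\times Y)$. Since $c_1(X\times Y)$ splits additively under K\"unneth, the Euler vector field on $X\times Y$ is the sum of the pullbacks of the Euler vector fields on $X$ and on $Y$. Next I would apply the genus-zero Gromov--Witten product formula for $X\times Y$ to decompose the quantum product $\star_{X\times Y}$ according to the K\"unneth type of the curve class $\beta=(\beta_1,\beta_2)\in H_2(X)\oplus H_2(Y)$.

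With this decomposition I expect a structural identity of the form
\[
K_{X\times Y}(q,t) \;=\; K_X(q_1,t_1)\otimes\mathrm{id} \;+\; \mathrm{id}\otimes K_Y(q_2,t_2) \;+\; K_{\mathrm{mix}}(q,t),
\]
where the first two summands assemble the contributions of the ``pure'' curve classes $(\beta_1,0)$ and $(0,\beta_2)$ --- whose GW invariants factor as GW invariants of one factor times classical intersection numbers on the other --- and $K_{\mathrm{mix}}$ collects all remaining classes in which both components are nonzero. Assuming this identity, it is elementary linear algebra that the spectrum of $A\otimes\mathrm{id}+\mathrm{id}\otimes B$ equals $\{\alpha+\beta:\alpha\in\mathrm{Spec}(A),\,\beta\in\mathrm{Spec}(B)\}$, so once we show $K_{\mathrm{mix}}\to 0$, continuity of eigenvalues under perturbation delivers the claimed limit set.

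The main obstacle is precisely the control of $K_{\mathrm{mix}}$ in the conical limit. Each monomial appearing there carries a factor $q_1^{\beta_1}q_2^{\beta_2}$ with \emph{both} $\beta_1,\beta_2$ nonzero effective; the conical open $U\subset\operatorname{Spec}(\mathbb{C}[q])$ chosen in Section~\ref{subsec:Kstr} is designed so that along any approach $q\to\vec{0}$ inside $U$, these mixed factors shrink strictly faster than the individual $q_1^{\beta_1}$ and $q_2^{\beta_2}$ appearing in the pure pieces, uniformly on the relevant bounded range of $t$. I would therefore have to verify that the a priori formal sum defining $K_{\mathrm{mix}}$ converges analytically on $U$ and that its limit is $0$; this reduces to uniform control of the effective cone of $X\times Y$ against the distinguished conical direction in $U$, together with the convergence properties of the Novikov expansion that are built into the setup of Section~\ref{subsec:Kstr}. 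Once that step is in hand, Steps~1, 2, and 4 combine to give the theorem.
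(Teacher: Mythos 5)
Your overall strategy is the same as the paper's: write $K_{X\times Y}=K_X\otimes\mathrm{id}+\mathrm{id}\otimes K_Y+K_{\mathrm{mix}}$ via the K\"unneth splitting of $c_1$ and Behrend's product formula, use the Kronecker-sum spectrum fact, and then argue the mixed part is negligible. However, the step you flag as "the main obstacle" is precisely where the real mathematical content lies, and your proposed way of closing it does not work as stated. You argue that every mixed monomial $q_1^{\beta_1}q_2^{\beta_2}$ (both $\beta_i\neq 0$) "shrinks strictly faster" than the pure monomials, so $K_{\mathrm{mix}}\to 0$ and continuity of eigenvalues finishes the proof. This is insufficient for two reasons. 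First, the potential is only assumed to lie in $\mathbb{Q}\llbracket q,t\rrbracket[q^{-1}]$: negative powers of the $q$-variables are allowed, so a mixed monomial need not tend to zero at all, and pointwise smallness of individual monomials is not the right comparison. What is actually needed is an \emph{entrywise leading-order} statement: in every relevant third derivative $\Gamma_{abc}^{X\times Y}$ (hence in every entry of $K$), the minimal-order terms come from the pure contributions, so that mixed classes can never produce a new leading term --- in particular not in an entry where the Kronecker sum happens to vanish. This is Corollary~\ref{cor:noleadingmixed} in the paper, and proving it requires genuine Gromov--Witten input: the splitting axiom and divisor axiom are used to reduce a nonzero mixed invariant to codimension-zero invariants of $X$ and of $Y$ with smaller curve classes (Propositions~\ref{prop:firstderiv} and \ref{prop:secondderiv}), and Lemma~\ref{lem:codimzeroglue} glues these back to show the corresponding pure term is actually present, of no larger order. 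Your proposal defers exactly this verification, so the decomposition
$K_{X\times Y}=K_X\otimes\mathrm{id}+\mathrm{id}\otimes K_Y+(\text{higher order})$
remains unproved.

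Second, even granting the leading-order decomposition, "continuity of eigenvalues under perturbation" is not the right closing step, because the limit set in the theorem is $\{\lambda_i(q_1,t_1)+\mu_j(q_2,t_2)\}$ with $\lambda_i,\mu_j$ still functions of the small parameters, and the matrices need not extend continuously to $q=\vec{0}$ (again due to possible negative $q$-powers and branching such as $\pm\sqrt{q}$ for $\mathbb{P}^1$). The paper instead argues on the characteristic polynomial: at leading order it factors according to the Kronecker-sum eigenvalues (Lemma~\ref{lem:kronecker_prod_spectrum}), and a Hensel's-lemma argument, as in \cite[Theorem~5.3]{gyenge2025blow}, shows this factorization persists for small nonzero $q$ in the conical set $U$, yielding the asymptotic identification of the spectrum. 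You would need to replace your continuity argument by such a Hensel/Newton-polygon step, or else justify why all entries are regular at $q=0$, which the setup does not guarantee.
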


For this, our key computation will show that the leading terms of the connection matrix arise from the factors \(K_X \otimes \mathrm{id}\) and \(\mathrm{id} \otimes K_Y\), while contributions from mixed curve classes are always of higher order and do not affect the minimal-order terms of the spectrum. This result provides a precise asymptotic description of the spectrum of the quantum connection for products and clarifies the extent to which quantum cohomology of a product decouples at leading order.

Our second main result concerns the structure of the quantum \(D\)-module for product varieties. We construct a formal change of variables that transforms the quantum \(D\)-module of \(X \times Y\) into a form reflecting the factorization into \(X\) and \(Y\). This formal transformation is compatible with the quantum connection and allows us to express flat sections of the product in terms of the flat sections of the factors. As a consequence, questions about solutions of the quantum differential equations and their monodromy reduce, at the level of formal power series, to corresponding questions for \(X\) and \(Y\) separately. For a smooth projective variety $X$, denote by $\mathrm{QDM}(X)^{\mathrm{la}}$ the quantum D-module of $X$ up to a possible formal base change (see Section~\ref{sec:opK} for the details).

\begin{theorem}[{Theorem~\ref{thm:formtensoriso}}]
There exists a a formal isomorphism
\[\mathrm{QDM}(X \times Y)^{\mathrm{la}} \to \mathrm{QDM}(X)^{\mathrm{la}} \otimes \mathrm{QDM}(Y)^{\mathrm{la}} \]
that commutes with the quantum connection.
\end{theorem}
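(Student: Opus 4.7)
The plan is to promote the spectral comparison of Theorem~\ref{thm:main1_eiglim} to an isomorphism of $D$-modules by constructing a formal gauge transformation that absorbs the Gromov--Witten contributions of mixed curve classes. The starting point is the Künneth isomorphism $H^{*}(X \times Y) \cong H^{*}(X) \otimes H^{*}(Y)$, together with the factorization of the Mori cone $\mathrm{NE}(X \times Y) = \mathrm{NE}(X) \times \mathrm{NE}(Y)$, which lets us split the Novikov variable as $q = (q_{1}, q_{2})$. These together give a canonical identification of the underlying sheaves of $\mathrm{QDM}(X \times Y)^{\mathrm{la}}$ and $\mathrm{QDM}(X)^{\mathrm{la}} \otimes \mathrm{QDM}(Y)^{\mathrm{la}}$ after the base change encoded by the superscript $\mathrm{la}$.

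Under this identification, one decomposes the Dubrovin connection as
\[
\nabla^{X \times Y} \;=\; \nabla^{X} \otimes \mathrm{id} \,+\, \mathrm{id} \otimes \nabla^{Y} \,+\, \Theta,
\]
where $\Theta$ collects the contributions of effective classes $(\beta_{1}, \beta_{2})$ with both $\beta_{1} \neq 0$ and $\beta_{2} \neq 0$. In particular $\Theta$ vanishes on the boundary loci $\{q_{1} = 0\}$ and $\{q_{2} = 0\}$, and by Theorem~\ref{thm:main1_eiglim} the leading-order Euler direction is governed entirely by the unmixed operator $K_{X} \otimes \mathrm{id} + \mathrm{id} \otimes K_{Y}$. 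I would then seek a formal gauge $P = \mathrm{id} + \sum_{k \geq 1} P_{k}$, with $P_{k}$ of total weight $k$ in the mixed monomials $q_{1}^{\beta_{1}} q_{2}^{\beta_{2}}$, satisfying
\[
P \circ \nabla^{X \times Y} \circ P^{-1} \;=\; \nabla^{X} \otimes \mathrm{id} \,+\, \mathrm{id} \otimes \nabla^{Y}.
\]
Expanding and matching order by order reduces the problem to a sequence of commutator equations
\[
\bigl[K_{X} \otimes \mathrm{id} + \mathrm{id} \otimes K_{Y},\; P_{k}\bigr] \;=\; R_{k},
\]
where $R_{k}$ is built from $\Theta$ and the previously constructed $P_{1}, \dots, P_{k-1}$.

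Solvability of this recursion is exactly where Theorem~\ref{thm:main1_eiglim} enters: its control of the quantum spectrum as pairwise sums $\lambda_{i} + \mu_{j}$ in the conical region $U$ guarantees, after a possible further formal base change (this is the role of the $\mathrm{la}$ superscript), that the adjoint action of $K_{X} \otimes \mathrm{id} + \mathrm{id} \otimes K_{Y}$ is invertible on the relevant off-diagonal subspaces, so each $P_{k}$ can be solved for. Flatness of all three connections then ensures that a $P$ constructed to intertwine the Euler direction automatically intertwines the full connection, since the compatibilities in the other directions are forced by the flatness identities.

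The main obstacle is precisely this non-resonance step: the eigenvalues $\lambda_{i}(q_{1}, t_{1}) + \mu_{j}(q_{2}, t_{2})$ must remain pairwise distinct (in the formal sense required for the commutator equation to be invertible) throughout the recursion. This is the technical content of the formal base change built into $\mathrm{la}$, and verifying that the constructed $P$ actually lies in the admissible class, rather than merely as an abstract formal series, is the point where care is required. Once this is established, $P$ is the desired formal isomorphism, and the $D$-module structure is preserved by construction.
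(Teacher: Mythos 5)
Your proposal takes a genuinely different and more ambitious route than the paper (which deduces Theorem~\ref{thm:formtensoriso} in one line from the leading-term decomposition of Proposition~\ref{prop:prod_euler_lead}, in the same spirit as the blow-up statement, Theorem~\ref{thm:quantumbu}), but as written it has a gap that is not merely technical. A fiberwise gauge transformation $P$ alone can never achieve
\[
P \circ \nabla^{X \times Y} \circ P^{-1} \;=\; \nabla^{X} \otimes \mathrm{id} + \mathrm{id} \otimes \nabla^{Y},
\]
because conjugation preserves the characteristic polynomial of the $u^{-2}$-coefficient, while the spectrum of $K_{X \times Y}$ agrees with the Kronecker sums $\lambda_i + \mu_j$ only asymptotically as $q \to \vec{0}$ (Theorem~\ref{thm:main1_eiglim} is a limit statement): the mixed classes genuinely deform the eigenvalues at higher order in $q_1^{\beta_X} q_2^{\beta_Y}$. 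This is exactly why the theorem's statement includes the formal change of variables $\tau \mapsto \sum_l \tau_X^l \otimes \tau_Y^l$ over $\mathbb{C}[[Q_1,Q_2]]$: the re-parametrization of the base is what matches the deformed spectrum of the product with sums of the factors' spectra at shifted arguments. You have dropped this ingredient and tried to fold it into the superscript $\mathrm{la}$, but $\mathrm{la}$ is only a base change of the coefficient/Novikov ring (as in $\mathbb{C}((Q_E^{-1/(r-1)}))[[Q]]$ for blow-ups); it cannot alter eigenvalue functions, so it neither substitutes for the change of variables nor resolves spectral coincidences.

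Second, even after fixing the base identification, your recursion $[K_X \otimes \mathrm{id} + \mathrm{id} \otimes K_Y,\, P_k] = R_k$ is solvable only when $R_k$ has no component in the centralizer of the leading operator. Such resonant components are unavoidable in general: they occur whenever $\lambda_i + \mu_j = \lambda_{i'} + \mu_{j'}$ for $(i,j) \neq (i',j')$, and in particular whenever $QH^*(X)$ or $QH^*(Y)$ fails to be semisimple --- which is precisely the $NSQC$ setting this paper is aimed at. What a recursion of this type can deliver is at best a block decomposition by exponential factors (a Hukuhara--Levelt--Turrittin-type splitting as in \eqref{eq:hltdecomp}), not the full intertwining identity you assert; the residual resonant parts must be matched against the regular pieces of $\mathrm{QDM}(X)^{\mathrm{la}} \otimes \mathrm{QDM}(Y)^{\mathrm{la}}$ rather than gauged away. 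The flatness argument for the remaining directions is fine in principle, but only once the base identification and the resonance bookkeeping have been handled; as it stands, the two missing ingredients are exactly where the content of the theorem lies.
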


These structural results have two notable applications. First, they allow us to compute \emph{atoms}, birational invariants introduced in \cite{katzarkov2025birational}, for product varieties. By understanding the decomposition of the quantum connection, we can express the atoms of a product in terms of the atoms of the factors, providing new tools for distinguishing birational equivalence classes. Denoting by $\mathrm{Atom}(X)$ the atom of a smooth projective variety $X$, we have the following.
\begin{corollary}[Corollary~\ref{cor:atommult}]
After a suitable formal change of quantum variables, one has
\[
\mathrm{Atom}(X \times Y) \;\sim\; \mathrm{Atom}(X) \otimes \mathrm{Atom}(Y).
\]
\end{corollary}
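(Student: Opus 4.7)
The plan is to deduce the corollary by combining the formal isomorphism of quantum $D$-modules from Theorem~\ref{thm:formtensoriso} with the spectral statement of Theorem~\ref{thm:main1_eiglim}, together with the standard compatibility of the Hukuhara--Levelt--Turrittin (HLT) formal decomposition with tensor products. Recall that the atom $\mathrm{Atom}(X)$ is a canonical summand of $\mathrm{QDM}(X)^{\mathrm{la}}$ produced by the formal decomposition of the quantum connection at its irregular singularity: the formal flat bundle splits into exponential sectors indexed by the quantum spectrum, and the atom is read off from this splitting.

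First I would verify the abstract input: for two meromorphic connections admitting HLT decompositions $M\cong\bigoplus_i M_i$ and $N\cong\bigoplus_j N_j$ with exponential twists $e^{\lambda_i/z}$ and $e^{\mu_j/z}$, the tensor product decomposes as $M\otimes N\cong\bigoplus_{i,j}(M_i\otimes N_j)$ with twists $e^{(\lambda_i+\mu_j)/z}$. This is a formal consequence of the additivity of irregular parts under tensor products of connections and is entirely general.

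Second, I would apply Theorem~\ref{thm:formtensoriso} to replace $\mathrm{QDM}(X\times Y)^{\mathrm{la}}$ by $\mathrm{QDM}(X)^{\mathrm{la}}\otimes\mathrm{QDM}(Y)^{\mathrm{la}}$; the ``suitable formal change of quantum variables'' appearing in the corollary is precisely the base change provided by that theorem. Combined with the previous paragraph, the HLT summands of the product are tensor products of the HLT summands of the factors. Theorem~\ref{thm:main1_eiglim} then ensures that the exponential labels of those tensor summands are exactly $\lambda_i(q_1,t_1)+\mu_j(q_2,t_2)$, i.e.\ the spectrum one predicts from tensoring; this both confirms that no spurious sectors arise and pins down each atom of $X\times Y$ as the tensor product of the corresponding atoms of $X$ and $Y$. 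Extracting the canonical summand $\mathrm{Atom}(\cdot)$ from each side then yields the asserted multiplicativity $\mathrm{Atom}(X\times Y)\sim\mathrm{Atom}(X)\otimes\mathrm{Atom}(Y)$.

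The main obstacle is the bookkeeping at coincidences in the spectrum: if $\lambda_i+\mu_j=\lambda_{i'}+\mu_{j'}$ for distinct pairs, the HLT decomposition fuses the corresponding summands into a single isotypic component, and the tensor-product description holds only up to this regrouping. The symbol $\sim$ in the statement is designed to absorb exactly such fusions (together with the formal change of variables from Theorem~\ref{thm:formtensoriso}); making this precise requires choosing, in the sense of \cite{katzarkov2025birational}, how atoms are labelled when eigenvalues collide, and checking that the asymptotic regime $q\to\vec 0$ in the conical open $U$ of Theorem~\ref{thm:main1_eiglim} is generic enough to pull apart the $(i,j)$ labelling. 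Once these are settled the corollary is immediate.
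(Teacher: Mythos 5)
Your proposal is correct and follows essentially the same route as the paper, whose entire proof is that the corollary is a reformulation of Theorem~\ref{thm:formtensoriso}: the formal isomorphism of quantum $D$-modules, with connection matrix $K_X \otimes \mathrm{id} + \mathrm{id} \otimes K_Y$ in the $\partial_u$-direction, immediately identifies the HLT summands (and hence the atoms) of $X \times Y$ with tensor products of those of $X$ and $Y$. Your additional checks — compatibility of the HLT decomposition with tensor products and the spectral confirmation via Theorem~\ref{thm:main1_eiglim} — are sensible elaborations of what the paper leaves implicit, not a different argument.
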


Second, it is shown in \cite[Section~5.3.2]{katzarkov2025birational} that there exists natural additive map
\[
\begin{aligned}
\phi \colon K_0(\mathrm{Var}) & \longrightarrow \mathbb{Z}^{\bigoplus_{\overline{c}} NSQC_{\overline{c}}}/\sim \\
[X] & \longmapsto \mathrm{Atom}(X),
\end{aligned}
\]
into a certain abelian group $\mathbb{Z}^{\bigoplus_{\overline{c}} NSQC_{\overline{c}}}/\sim$ defined in \cite{kontsevich2025moduli}.
Our results imply the multiplicativity of this map. 

\begin{corollary}[{Corollary~\ref{cor:phiring}}]
Equip the abelian group $\mathbb{Z}^{\bigoplus_{\overline{c}} NSQC_{\overline{c}}}/\sim$ with the multiplication induced by the tensor product of atoms. Then the map $\phi$
is a ring homomorphism.
\end{corollary}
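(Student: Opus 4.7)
The plan is to reduce the statement to Corollary~\ref{cor:atommult} together with a standard generation result for $K_0(\mathrm{Var})$. Recall that the ring $K_0(\mathrm{Var})$ has multiplication $[X] \cdot [Y] = [X \times Y]$ on classes of varieties, with unit $[\mathrm{pt}]$. Since $\phi$ is already additive by \cite[Section~5.3.2]{katzarkov2025birational}, and the multiplication on the target is $\mathbb{Z}$-bilinear by construction, it suffices to verify (a) $\phi([X \times Y]) = \phi([X]) \cdot \phi([Y])$ for all smooth projective $X, Y$, and (b) $\phi([\mathrm{pt}]) = 1$.

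For (a), unwinding the multiplication on $\mathbb{Z}^{\bigoplus_{\overline{c}} NSQC_{\overline{c}}}/\sim$ induced by the tensor product of atoms, the identity $\phi([X]) \cdot \phi([Y]) = \phi([X \times Y])$ becomes $\mathrm{Atom}(X) \otimes \mathrm{Atom}(Y) \sim \mathrm{Atom}(X \times Y)$, which is precisely Corollary~\ref{cor:atommult}. For (b), $\mathrm{QDM}(\mathrm{pt})$ is the rank-one D-module with zero connection, so $\mathrm{Atom}(\mathrm{pt})$ represents the trivial atom, which is the identity for $\otimes$; hence $\phi([\mathrm{pt}]) = 1$ in the target.

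It remains to extend multiplicativity from smooth projective generators to arbitrary classes. Here I would invoke Bittner's presentation of $K_0(\mathrm{Var}_{\mathbb{C}})$: every class is a $\mathbb{Z}$-linear combination of classes of smooth projective varieties. Together with additivity of $\phi$ and $\mathbb{Z}$-bilinearity of the multiplication on the target, the identity $\phi(\alpha \cdot \beta) = \phi(\alpha)\phi(\beta)$ then propagates by distributivity.

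The main obstacle I anticipate is not the ring-homomorphism check itself, which is a formal consequence of the previous results, but rather pinning down the precise interplay between the equivalence $\sim$ of \cite{kontsevich2025moduli} and the construction of atoms, so that Corollary~\ref{cor:atommult} literally yields the required equation in $\mathbb{Z}^{\bigoplus_{\overline{c}} NSQC_{\overline{c}}}/\sim$. Concretely, one should confirm that the formal base change in Theorem~\ref{thm:formtensoriso} belongs to the class of transformations already quotiented out by $\sim$, so that the $\otimes$-operation on atoms descends to the same product used to define the ring structure on the target, with no further refinement of the equivalence needed.
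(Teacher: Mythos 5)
Your proposal is correct and follows essentially the same route as the paper: additivity via the compatibility of the blow-up relation of atoms with Bittner's presentation (Proposition~\ref{prop:bittnerpres}), and multiplicativity via Corollary~\ref{cor:atommult}, extended to all classes by bilinearity. Your additional check that $\phi([\mathrm{pt}])=1$ and your remark that the formal change of variables must be absorbed by the equivalence $\sim$ are worthwhile points of care that the paper's terse proof leaves implicit.
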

In particular, we get that $\phi$ provides a \emph{motivic measure} as claimed in \cite{katzarkov2025birational}. 

The paper is organized as follows. In Section~\ref{sec:prelim}, we review the necessary background on quantum cohomology, Gromov--Witten invariants, the quantum connection and the tensor product of connections. Section~\ref{sec:gwinvprod} develops the theory of codimension-zero invariants for product varieties, leading to a proof of the decomposition of the leading terms of the quantum connection. Section~\ref{sec:opK} is devoted to the spectral analysis and the formal change-of-variable result for the quantum \(D\)-module. Finally, Section~\ref{sec:applications} presents the applications on quantum atoms and motivic measures.


\subsection*{Acknowledgement}
The author is thankful to Sz. Szabó for helpful discussions on the topic. The author was supported by the János Bolyai Research Scholarship of the Hungarian Academy of Sciences. 

\section{Preliminaries}
\label{sec:prelim}

\subsection{Gromov--Witten invariants}
Let $X$ be a nonsingular projective variety over $\CC$, and let $\beta \in H_2(X,\mathbb{Q})$ be a curve class. For a positive integer $k$, denote by
\[
\overline{\mathcal{M}}_k(X,\beta)
\]
the moduli space of genus-zero, $k$-pointed stable maps
$f \colon (C, p_1, \dots, p_k) \to X$
such that $f_*[C] = \beta$. This is a proper Deligne--Mumford stack equipped with a perfect obstruction theory \cite{behrend1997intrinsic}. The associated virtual fundamental class is denoted
\[
[\overline{\mathcal{M}}_k(X,\beta)]^{\mathrm{vir}}.
\]

Each marked point determines an evaluation map
\[
\mathrm{ev}_i \colon \overline{\mathcal{M}}_k(X,\beta) \to X, 
\qquad 1 \leq i \leq k.
\]
Given cohomology classes $\gamma_1,\dots,\gamma_k \in H^*(X)$, the corresponding genus-zero Gromov--Witten invariant is defined by
\[
I_{\beta}(\gamma_1,\dots,\gamma_k) 
= \int_{[\overline{\mathcal{M}}_k(X,\beta)]^{\mathrm{vir}}}
\prod_{i=1}^k \mathrm{ev}_i^*(\gamma_i).
\]

More generally, it is useful to consider \emph{Gromov--Witten classes} valued in the cohomology of $\overline{\mathcal{M}}_k$, rather than numerical invariants. Following \cite[Section~2]{kontsevich1994gromov}, one proceeds as follows. Let
\[
\eta \colon \overline{\mathcal{M}}_k(X,\beta) \to \overline{\mathcal{M}}_k
\]
be the forgetful morphism. For $\gamma_1,\dots,\gamma_k \in H^*(X)$, set
\[
I^{X}_{\beta}(\gamma_1,\dots,\gamma_k) 
\coloneqq \eta_*\big( \mathrm{ev}_1^*(\gamma_1)\cdots \mathrm{ev}_k^*(\gamma_k) \big) 
\in H^*(\overline{\mathcal{M}}_k).
\]
These are sometimes called \emph{tree-level Gromov--Witten classes}. This gives, for each $k \geq 3$, a map
\[ 
\begin{array}{r c l}
  I^{X}_{\beta}: H^{\ast}(X)^{\otimes k}& \to    & H^{\ast}(\overline{\mathcal{M}}_k) \\
\end{array}
\]
Explicit construction of these classes is given in \cite{behrend1996stacks}.

\begin{remark}
The degree of $I^{X}_{\beta}(\gamma_1,\dots,\gamma_k)$ is determined by the virtual dimension formula. Indeed, the maps $I^{X}_{\beta}$ are homogeneous of degree $2(K_X.\beta)  - 2\dim_{\CC} X$ so
\[
\deg I^{X}_{\beta}(\gamma_1,\dots,\gamma_k) 
= \sum_{i=1}^k \deg \gamma_i + 2(K_X \cdot \beta) - 2\dim_\CC X.
\]
Since $\dim_\CC \overline{\mathcal{M}}_k = k-3$, the codimension of the resulting class is
\[
2k - 6 - \deg I^{X}_{\beta}(\gamma_1,\dots,\gamma_k).
\]
The numerical invariants $I_{\beta}(\gamma_1,\dots,\gamma_k)$ arise exactly from the codimension-zero part of these classes.
\end{remark}

Finally, let $B \subset H_2(X,\mathbb{Q})$ denote the effective cone, i.e.~the cone generated by algebraic curve classes with nonnegative rational coefficients. It is standard that $I_{\beta}(\gamma_1,\dots,\gamma_k) = 0$ for $\beta \notin B$, since in this case the moduli space $\overline{\mathcal{M}}_k(X,\beta)$ is empty.


\subsection{The GW potential and the quantum product}

Fix a basis of the cohomology ring $H^*(X,\mathbb{Q})$ as follows. Let
\[
T_0 = 1 \in H^0(X,\mathbb{Q}), \qquad 
T_1,\dots,T_m \in H^2(X,\mathbb{Q}),
\]
be a basis of $H^2(X,\mathbb{Q})$, and extend this to a full homogeneous basis woth some $T_{m+1},\dots,T_p$ of $H^*(X,\mathbb{Q})$. The corresponding dual basis elements $\{T_i^{\vee}\}$ satisfy the relations $T_i (T_j^{\vee}) = T_j^{\vee} (T_i) = \delta_{ij}$.

Introduce formal variables
\[
t_0, q_1,\dots,q_m, t_{m+1},\dots,t_p,
\]
which we abbreviate collectively as $(q,t)$. The (quantum part of the) genus-zero \emph{quantum potential} of $X$ is the formal Laurent series
\[
\Gamma(q,t) 
= \sum_{\substack{n_{m+1},\dots,n_p \geq 0 \\ \beta \in B \setminus \{0\}}}
I_\beta\!\big( T_{m+1}^{n_{m+1}}\cdots T_p^{n_p} \big) 
\, q_1^{\int_\beta T_1} \cdots q_m^{\int_\beta T_m}
\frac{t_{m+1}^{n_{m+1}}\cdots t_p^{n_p}}{n_{m+1}!\cdots n_p!},
\]
which lives in $\QQ \llbracket q,t \rrbracket[q^{-1}]$. 

\begin{remark}
In general, negative powers of the variables $q_i$ may appear in $\Gamma$; see e.g.~\cite{gyenge2025blow}. It is expected that such negative exponents are bounded below, ensuring that the series belongs to the specified ring. We assume this boundedness property throughout.
\end{remark}

Define differential operators
\[
\partial_i \coloneqq 
\begin{cases}
q_i \dfrac{\partial}{\partial q_i}, & 1 \leq i \leq m, \\
\dfrac{\partial}{\partial t_i}, & i=0, m+1,\dots,p.
\end{cases}
\]
For convenience, set
\[
\Gamma_{ijk} \coloneqq \partial_i \partial_j \partial_k \Gamma.
\]

\begin{definition}\label{def:quantum_product}
The \emph{quantum product} of basis elements $T_i$ and $T_j$ is defined by
\[ T_i \ast_X T_j = T_i \cdot T_j + \sum_{e,f} \Gamma_{ije} g^{ef} T_f, \]
where $g_{ef} = \int_X T_e \cup T_f$ is the Poincaré pairing, and $(g^{ef})$ its inverse.

\end{definition}

When there is no confusion about the base variety, we will just write $T_i \ast T_j$ for this product.
The quantum product endows the free $\mathbb{Q} \llbracket q,t \rrbracket [q^{-1}]$-module generated by $\{T_0, \dots, T_p\}$ with a supercommutative, associateive $\mathbb{Q} \llbracket q,t \rrbracket [q^{-1}]$-algebra structure with unit $T_0$.

\subsection{The Dubrovin connection}
\label{subsec:dubrovin}

Dubrovin \cite{dubrovin1996geometry} showed that the quantum product induces a meromorphic connection and in turn a Frobenius manifold structure on $H^*(X, \mathbb{C})$, with the cup product as nonsingular pairing.

Let
\[ \tau = t_0(\tau)T_0 + \sum_{i=1}^m q_i(\tau)T_i + \sum_{i=m+1}^pt_{i}(\tau)T_i \in H^{\ast}(X, \mathbb{C}) \]
be a cohomology class.

\begin{definition}\label{ref:Dubrovin_connection}
The \emph{Dubrovin} (or \emph{quantum}) \emph{connection} on the trivial bundle
\[ 
H^{\ast}(X,\mathbb{Q}) \times \Spec \mathbb{C}[u, u^{-1}] \to \Spec \mathbb{C}[u, u^{-1}] \]
is the meromorphic flat connection
\begin{equation} 
\label{eq:dubconn1}
\nabla_{\frac{\partial}{\partial u}}^{(\tau)} = \frac{\partial}{\partial u} + \frac{1}{u^2}K^{(\tau)} + \frac{1}{u}G. \end{equation}
Here, $K^{(\tau)}$ is the quantum multiplication operator (see Definition~\ref{def:quantum_product}) with the Euler vector field
\[ c_1(T_X) + \sum_{i=0}^p(2-\deg\,T_i) t_iT_i, \]
and the grading operator $G$ acts on $H^d(X)$ as
\[ G|_{H^d(X)} = \frac{d-2}{2} \cdot \mathrm{Id}_{H^d(X)}. \]
\end{definition}

Suppose that the potential $\Gamma$ converges in $q,t$ or, equivalently, in $\tau$ to an analytic function on some domain. The family of meromorphic connections $\nabla^{(\tau)}$ over $H^{\ast}(X, \mathbb{C})$ then forms a flat connection
$ \nabla \text{ over } \Spec \mathbb{C}\{ q,t \}[u^{\pm}, q^{-1}]$, 
which is an isomonodromic family \cite{dubrovin1998geometry}.

Consider the trivial bundle
\[ H^{\ast}(X,\mathbb{C}) \times \Spec \mathbb{C}\{ q,t \}[q^{-1}, u, u^{-1}] \to \Spec \mathbb{C}\{ q,t \}[q^{-1}, u, u^{-1}], \]
with fibers $H^{\ast}(X,\mathbb{C})$. The coordinates $q, t, u$ provide a full coordinate system on the base, so $\frac{\partial}{\partial q}$, $\frac{\partial}{\partial t}$, and $\frac{\partial}{\partial u}$ form a global basis of the tangent bundle. The extended connection is given by:

\begin{equation}
\label{eq:dubconn2}
\nabla_{\frac{\partial}{\partial t_i}} = \frac{\partial}{\partial t_i} + \frac{1}{u}A_i,
\end{equation}
\begin{equation}
\label{eq:dubconn3}
\nabla_{\frac{\partial}{\partial q}} = \frac{\partial}{\partial q} + \frac{1}{uq}A,
\end{equation}
where $A_i$ is quantum multiplication by $T_i$, and $A$ is quantum multiplication by $c_1(\mathcal{O}(1))$ (with some $i$ satisfying $T_i = c_1(\mathcal{O}(1))$).

The flatness of $\nabla$ is equivalent to the associativity of the quantum product $*$. Equivalently, associativity corresponds to a partial differential equation satisfied by $F$, known as the Witten--Dijkgraaf--Verlinde--Verlinde (WDVV) equation \cite{kontsevich1994gromov, crauder1995quantum}.

\subsection{Tensor product of connections}

Let $E \to M$ and $F \to M$ be vector bundles over the same base $M$ with connections
\[
\nabla^E : \Gamma(E) \to \Gamma(T^*M \otimes E),
\]
\[
\nabla^F : \Gamma(F) \to \Gamma(T^*M \otimes F).
\]
The \emph{tensor product connection} on $E \otimes F$ is given by
\[
\nabla^{E \otimes F}(s \otimes t)
= (\nabla^E s) \otimes t + s \otimes (\nabla^F t),
\]
for $s \in \Gamma(E)$ and $t \in \Gamma(F)$.

\begin{lemma}
Let $E \to M$ be a rank $m$ complex vector bundle and $F \to M$ a rank $n$ complex vector bundle, equipped with connections $\nabla^E$ and $\nabla^F$.  
Choose local frames $(e_1,\dots,e_m)$ for $E$ and $(f_1,\dots,f_n)$ for $F$, and write
\[
\nabla^E e_i = \sum_{k=1}^m A_{ki} \, e_k, 
\quad
\nabla^F f_j = \sum_{\ell=1}^n B_{\ell j} \, f_\ell,
\]
where $A \in \Omega^1(U; M_m(\mathbb{C}))$ and $B \in \Omega^1(U; M_n(\mathbb{C}))$ are the local connection matrices.  
Then, in the induced local frame $(e_i \otimes f_j)_{1 \le i \le m,\ 1 \le j \le n}$ of $E \otimes F$, the tensor product connection
\[
\nabla^{E \otimes F}(s \otimes t) 
= (\nabla^E s) \otimes t + s \otimes (\nabla^F t)
\]
is given by
\[
\nabla^{E \otimes F} = d + (A \otimes I_n) + (I_m \otimes B).
\]
\end{lemma}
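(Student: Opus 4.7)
The plan is to verify the claim by direct computation on the induced frame $(e_i \otimes f_j)$ and then extend to arbitrary sections by the Leibniz rule. Since the connection $\nabla^{E\otimes F}$ is defined on decomposable sections and extended $\mathbb{C}$-linearly, it suffices to check that the two expressions agree when applied to each basis section $e_i \otimes f_j$ of $E \otimes F$.

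First I would apply the definition of the tensor product connection directly:
\[
\nabla^{E \otimes F}(e_i \otimes f_j)
= (\nabla^E e_i) \otimes f_j + e_i \otimes (\nabla^F f_j)
= \sum_{k=1}^m A_{ki} \, (e_k \otimes f_j) + \sum_{\ell=1}^n B_{\ell j} \, (e_i \otimes f_\ell).
\]
Next I would compute the right-hand side of the claimed formula on the same basis section. Under the identification of the induced frame with the tensor-product basis, the matrix $A \otimes I_n$ acts on $e_i \otimes f_j$ by $\sum_k A_{ki} (e_k \otimes f_j)$, and $I_m \otimes B$ acts by $\sum_\ell B_{\ell j} (e_i \otimes f_\ell)$, while $d$ applied to a constant basis section vanishes. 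Adding these contributions reproduces exactly the expression obtained from the definition.

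To finish, I would handle a general local section $\sigma = \sum_{i,j} f_{ij} (e_i \otimes f_j)$ with $f_{ij}$ smooth functions. Both $\nabla^{E \otimes F}$ and $d + A \otimes I_n + I_m \otimes B$ satisfy the Leibniz rule with respect to multiplication by smooth functions, so the equality on basis sections propagates to all sections: the $df_{ij}$ pieces match the $d$-term, while the $f_{ij}$-linear combinations of the connection-matrix terms match by the previous step. This establishes the identity of connection one-forms in the chosen frame.

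The argument is essentially formal and has no real obstacle; the only point requiring a moment of care is the bookkeeping of the Kronecker product convention so that the index matching of $(A \otimes I_n)$ and $(I_m \otimes B)$ in the ordered basis $(e_i \otimes f_j)$ indeed reproduces the sums written above. Once the index convention is fixed consistently, the result is immediate.
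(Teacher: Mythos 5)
Your proof is correct and follows essentially the same route as the paper: evaluate both sides on the basis sections $e_i \otimes f_j$ and identify the two sums with the actions of $A \otimes I_n$ and $I_m \otimes B$. The extra paragraph extending to general sections via the Leibniz rule is a harmless elaboration of what the paper leaves implicit.
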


\begin{proof}
Applying the definition of $\nabla^{E \otimes F}$ to a basis element $e_i \otimes f_j$ gives
\[
\nabla^{E \otimes F}(e_i \otimes f_j)
= (\nabla^E e_i) \otimes f_j + e_i \otimes (\nabla^F f_j).
\]
Using the local expressions for $\nabla^E e_i$ and $\nabla^F f_j$,
\[
(\nabla^E e_i) \otimes f_j
= \sum_{k=1}^m A_{ki} \, (e_k \otimes f_j)
\quad\text{and}\quad
e_i \otimes (\nabla^F f_j)
= \sum_{\ell=1}^n B_{\ell j} \, (e_i \otimes f_\ell).
\]
The first term corresponds to $A$ acting on the $E$-index and leaving the $F$-index fixed, i.e.\ $A \otimes I_n$, while the second term corresponds to $B$ acting on the $F$-index and leaving the $E$-index fixed, i.e.\ $I_m \otimes B$.  
Thus the total connection matrix in the tensor product frame is
\[
A \otimes I_n + I_m \otimes B,
\]
and adding the exterior derivative $d$ gives the stated formula.
\end{proof}

The sum $(A \otimes I_n) + (I_m \otimes B)$ is called the \emph{Kronecker sum} of the matrices $A$ and $B$. If $R^E$ and $R^F$ are the curvature $2$-forms of $\nabla^E$ and $\nabla^F$, then one can directly check that
\[
R^{E \otimes F} = R^E \otimes I + I \otimes R^F,
\]
again matching the matrix Kronecker sum pattern.

More generally, one can consider tensor product of connections on principal bundles. Let $P_E \to M$ be the principal $GL(V)$-bundle associated to $E$, and  
let $P_F \to M$ be the principal $GL(W)$-bundle associated to $F$.  
The connection $\nabla^E$ corresponds to a $\mathfrak{gl}(V)$-valued connection $1$-form
\[
\omega_E \in \Omega^1(P_E; \mathfrak{gl}(V)),
\]
and similarly $\nabla^F$ corresponds to
\[
\omega_F \in \Omega^1(P_F; \mathfrak{gl}(W)).
\]

The tensor product bundle $E \otimes F$ corresponds to the principal bundle
\[
P_{E \otimes F} := P_E \times_M P_F
\]
with structure group $GL(V \otimes W)$. There is a natural Lie algebra embedding
\[
\iota : \mathfrak{gl}(V) \oplus \mathfrak{gl}(W) \longrightarrow \mathfrak{gl}(V \otimes W),
\]
given by
\[
\iota(A,B) = A \otimes I_W + I_V \otimes B.
\]

\begin{definition}
The \emph{tensor product connection} on $P_{E \otimes F}$ is the connection whose connection $1$-form is
\[
\omega_{E \otimes F} := \iota(\pi_E^* \omega_E, \pi_F^* \omega_F),
\]
where $\pi_E$ and $\pi_F$ are the projections $P_E \times_M P_F \to P_E$ and $P_F$ respectively.
\end{definition}

In this form, the tensor product connection is entirely coordinate-free, and the Kronecker sum
\[
A \otimes I + I \otimes B
\]
appears as soon as one chooses local trivializations of $P_E$ and $P_F$.

\section{GW invariants of product varieties}
\label{sec:gwinvprod}
\subsection{Products}

Let $X, Y$ be nonsingular projective varieties. Assume first that $H^1(X)=H^1(Y)=0$. We introduce the following notation:

\begin{itemize}
\item \( QH^*(X) \) and \( QH^*(Y) \) denote the quantum cohomology rings of $X$ and $Y$;
\item \( K_X \) and \( K_Y \) denote the matrices of quantum multiplication by \( c_1(X) \) and \( c_1(Y) \);
\item \( H^*(X) \) and \( H^*(Y) \) denote the classical cohomology rings of \( X \) and \( Y \);
\item \( \{T_i\}_{i=1}^p \) and \( \{S_j\}_{j=1}^r \) are bases of \( H^*(X) \) and \( H^*(Y) \), respectively;
\item \( \{T_i\}_{i=1}^l \subset H^2(X) \) and \( \{S_j\}_{j=1}^m \subset H^2(Y) \) are bases of $H^2$.
\end{itemize}

Then:
\begin{itemize}
\item \( H^*(X \times Y) \cong H^*(X) \otimes H^*(Y) \), with basis \( \{ T_i \otimes S_j \}_{i,j} \);
\item $H^2(X \times Y)$ has basis $\{T_i \otimes 1\}_{i=1}^l \cup \{1 \otimes S_j\}_{j=1}^m$.
\end{itemize}

In what follows, we identify cohomology classes in $X$ and $Y$ with their pullbacks to $X \times Y$. Concretely, any class $b \in H^2(X \times Y)$ decomposes as
	\[
	b=b_X \otimes 1 + 1 \otimes b_Y, \qquad b_X \in H^2(X), \ b_Y \in H^2(Y);
	\]
	we abbreviate this simply as $\beta=(\beta_X,\beta_Y)$.
	In particular,
	\[
	c_1(X \times Y) \;=\; c_1(X) \otimes 1 + 1 \otimes c_1(Y) \;=\; (c_1(X), c_1(Y)).
	\]
	


The following product formula for Gromov--Witten invariants is due to Behrend.

\begin{theorem}[{\cite{behrend1997product}}]
\label{thm:behrend_prod}
For any curve class $\beta \in H_2(X \times Y)$ and insertions $\gamma_i \in H^*(X)$, $\epsilon_i \in H^*(Y)$,
\[
I^{X \times Y}_{\beta}\bigl((\gamma_1 \otimes \epsilon_1)\cdots (\gamma_n \otimes \epsilon_n)\bigr) 
= (-1)^s \, I^{X}_{\beta_X}(\gamma_1 \cdots \gamma_n)\,\cup\, I^{Y}_{\beta_Y}(\epsilon_1 \cdots \epsilon_n),
\]
where $\beta_X = (p_X)_{\ast}(\beta)$, $\beta_Y = (p_Y)_{\ast}(\beta)$ are the projections of $\beta$, and 
\[
s=\sum_{i>j} \deg \gamma_i \cdot \deg \epsilon_j.
\]
\end{theorem}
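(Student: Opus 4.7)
The plan is to reduce the computation of $I^{X\times Y}_\beta$ to a pair of computations on the factor moduli spaces, linked via a natural comparison morphism to $\overline{\mathcal{M}}_n$. First, I would unpack the definition and use Künneth to rewrite the evaluation pullbacks as
\[
\mathrm{ev}_i^\ast(\gamma_i \otimes \epsilon_i) \;=\; \mathrm{ev}_{i,X}^\ast \gamma_i \,\cup\, \mathrm{ev}_{i,Y}^\ast \epsilon_i,
\]
where $\mathrm{ev}_{i,X}$ and $\mathrm{ev}_{i,Y}$ denote the compositions of $\mathrm{ev}_i$ with the projections $X\times Y \to X$ and $X\times Y \to Y$. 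Rearranging the cup product of these classes so that all $\gamma$-pullbacks precede all $\epsilon$-pullbacks introduces the Koszul sign $(-1)^s$ with $s = \sum_{i>j}\deg\gamma_i\cdot\deg\epsilon_j$, which accounts for the sign in the theorem.

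Next, I would analyze the natural morphism
\[
\Phi \colon \overline{\mathcal{M}}_n(X\times Y,\beta) \longrightarrow \overline{\mathcal{M}}_n(X,\beta_X)\times_{\overline{\mathcal{M}}_n}\overline{\mathcal{M}}_n(Y,\beta_Y),
\]
sending a stable map to its pair of projections (after stabilizing the source on each factor as needed). The central technical point is the compatibility of virtual fundamental classes along $\Phi$: the relative perfect obstruction theory of $\overline{\mathcal{M}}_n(X\times Y,\beta)$ over $\overline{\mathcal{M}}_n$ canonically splits as the direct sum of the pullbacks of the relative obstruction theories of the two factors. Consequently, the virtual class $[\overline{\mathcal{M}}_n(X\times Y,\beta)]^{\mathrm{vir}}$ is identified with the refined fiber product of $[\overline{\mathcal{M}}_n(X,\beta_X)]^{\mathrm{vir}}$ and $[\overline{\mathcal{M}}_n(Y,\beta_Y)]^{\mathrm{vir}}$ over $\overline{\mathcal{M}}_n$.

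Finally, I would push forward along the forgetful morphism $\eta \colon \overline{\mathcal{M}}_n(X\times Y,\beta)\to\overline{\mathcal{M}}_n$, factored through $\Phi$. Since the collected $\gamma$-classes descend through the projection to $\overline{\mathcal{M}}_n(X,\beta_X)$ and the $\epsilon$-classes through the projection to $\overline{\mathcal{M}}_n(Y,\beta_Y)$, the projection formula combined with the refined base change for the virtual class separates the pushforward into the cup product $I^{X}_{\beta_X}(\gamma_1\cdots\gamma_n)\cup I^{Y}_{\beta_Y}(\epsilon_1\cdots\epsilon_n)$ in $H^\ast(\overline{\mathcal{M}}_n)$, as claimed.

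The principal obstacle is establishing the factorization of the virtual fundamental class along $\Phi$. This is not a set-theoretic identification: one must verify at the level of perfect obstruction theories that the tangent--obstruction complex of stable maps to $X\times Y$ (relative to the moduli of pointed curves) is canonically the direct sum of the tangent--obstruction complexes of stable maps to $X$ and to $Y$, and then invoke the compatibility of virtual classes with refined Gysin pullback along a regular embedding of obstruction bundles. The assumption $H^1(X) = H^1(Y) = 0$ is what ensures that deformations of the source do not couple the two factors in a way that would spoil this splitting. This is precisely the content of Behrend's argument in \cite{behrend1997product}.
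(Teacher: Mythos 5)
A preliminary remark on the comparison you were asked for: the paper does not prove this statement at all — it is quoted from Behrend's product-formula paper \cite{behrend1997product} and used as a black box. So your sketch can only be measured against Behrend's own argument, and in outline it does match it: Künneth plus the Koszul sign for the insertions, the comparison morphism to the fiber product of the factor moduli spaces over the moduli of curves, an identity of virtual classes coming from the splitting of the obstruction theory induced by $T_{X\times Y}\cong p_X^{*}T_X\oplus p_Y^{*}T_Y$, and the projection formula to separate the pushforward into a cup product on $\overline{\mathcal{M}}_n$.

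Two points need correction, and the first is a genuine conceptual error. (1) The role you assign to $H^1(X)=H^1(Y)=0$ is wrong: Behrend's product formula holds for arbitrary smooth projective factors (in all genera), and the splitting of the relative obstruction theory is automatic from $T_{X\times Y}= p_X^{*}T_X\oplus p_Y^{*}T_Y$; it has nothing to do with $H^1$. In the present paper the $H^1$ hypothesis only serves the Künneth bookkeeping of $H^2(X\times Y)$ and of curve classes (absence of $H^1\otimes H^1$ terms), and the remark immediately after the theorem explains that even this can be dropped. Deformations of the source curve genuinely couple the two factors — that coupling is exactly what the fiber product and the diagonal take care of, not something eliminated by an $H^1$ assumption. (2) Your identification of $[\overline{\mathcal{M}}_n(X\times Y,\beta)]^{\mathrm{vir}}$ with the refined fiber product is too strong as stated: $\Phi$ is not an isomorphism, because stabilizing the two projections contracts different components of the source, and the relative obstruction theories of the factors are naturally formulated over the Artin stack $\mathfrak{M}_{0,n}$ of prestable curves rather than over $\overline{\mathcal{M}}_n$. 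The correct statement, which is the actual technical content of Behrend's paper, is the pushforward identity $\Phi_{*}[\overline{\mathcal{M}}_n(X\times Y,\beta)]^{\mathrm{vir}}=\Delta^{!}\bigl([\overline{\mathcal{M}}_n(X,\beta_X)]^{\mathrm{vir}}\times[\overline{\mathcal{M}}_n(Y,\beta_Y)]^{\mathrm{vir}}\bigr)$, with $\Delta$ the diagonal of $\overline{\mathcal{M}}_n$; this weaker statement still yields the formula for the GW classes, since the evaluation classes pull back compatibly and the pushforward to $\overline{\mathcal{M}}_n$ then factors through $\Delta^{!}$ of the exterior product, which is the cup product.
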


\begin{remark}
	If both $H^1(X)$ and $H^1(Y)$ are nontrivial, then also classes from $H^1(X) \otimes H^1(Y)$ appear in the Künneth decomposition of $H^2(X \times Y)$. These have a nontrivial integral only over curve classes $\beta$ for which $ \beta_X \in H_1(X)$ and $ \beta_Y \in H_1(Y)$. But $I^{X}_{\beta_X}(\gamma_1 \cdots \gamma_n)=0$ and $I^{Y}_{\beta_Y}(\epsilon_1 \cdots \epsilon_n)=0$ for such classes. Hence, our results below hold even without the assumption that $H^1(X)=H^1(Y)=0$.
\end{remark}

\begin{lemma} 
\[
\vdim \overline{\mathcal{M}}(X \times Y, \beta) 
= \vdim \overline{\mathcal{M}}(X, \beta_X) \;+\; \vdim \overline{\mathcal{M}}(Y, \beta_Y) \;+\; 3.
\]
\end{lemma}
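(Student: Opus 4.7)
The statement is a direct consequence of the virtual dimension formula for genus-zero Kontsevich moduli spaces with no marked points,
\[
\vdim \overline{\mathcal{M}}(Z,\gamma) \;=\; \dim_{\mathbb{C}} Z + \int_{\gamma} c_1(T_Z) - 3,
\]
combined with the additivity of the tangent bundle under products.

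The plan is as follows. First, I would apply this formula to $Z = X \times Y$ with curve class $\beta$. Using the splitting $T_{X \times Y} \cong p_X^*T_X \oplus p_Y^*T_Y$, one has $c_1(T_{X \times Y}) = p_X^* c_1(T_X) + p_Y^* c_1(T_Y)$, and the projection formula then gives
\[
\int_\beta c_1(T_{X \times Y}) \;=\; \int_{\beta_X} c_1(T_X) + \int_{\beta_Y} c_1(T_Y),
\]
where $\beta_X = (p_X)_*\beta$ and $\beta_Y = (p_Y)_*\beta$. Combined with $\dim(X \times Y) = \dim X + \dim Y$, this rewrites $\vdim \overline{\mathcal{M}}(X \times Y,\beta)$ as the sum of the two ``intrinsic'' contributions coming from $X$ and $Y$, minus a single $-3$. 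Second, I would apply the same formula to $X$ with class $\beta_X$ and to $Y$ with class $\beta_Y$; each of these two virtual dimensions contributes its own $-3$, so adding them produces the same expression but with $-6$ in place of $-3$. Subtracting yields exactly the claimed discrepancy $-3 - (-6) = +3$.

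The proof is essentially bookkeeping, and the only point worth flagging explicitly is the marked-point convention. The $+3$ identity is tied to having \emph{no} marked points on all three moduli spaces: with $k$ marked points added uniformly, the correction becomes $3 - k$, since the marked-point count shifts each virtual dimension by the same amount and therefore appears once on the left but twice on the right. I would state this convention at the very start of the proof and then let the arithmetic run, so there is no real obstacle.
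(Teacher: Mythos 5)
Your proof is correct and follows essentially the same route as the paper: both apply the virtual dimension formula $\vdim \overline{\mathcal{M}}(Z,\gamma) = \int_\gamma c_1(T_Z) + \dim Z - 3$ to all three moduli spaces and use additivity of $c_1$ and of dimension under products, your version just spelling out the projection-formula step and the marked-point convention more explicitly.
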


\begin{proof}
Using the general formula
\[
\vdim \overline{\mathcal{M}}(Z, \beta) = \int_{\beta} c_1(Z) + \dim Z - 3,
\]
we compute
\[
\vdim \overline{\mathcal{M}}(X \times Y, \beta) 
= \int_{\beta} c_1(X \times Y) + \dim(X \times Y) -3.
\]
The additivity of $c_1$ and of the dimension gives the claim.
\end{proof}

We introduce formal vector variables as follows:
\begin{itemize}
\item $q_1$ corresponding to $\{T_i \otimes 1\}_{i=1}^l$ (i.e.\ to $\{T_i\}_{i=1}^l \subset H^2(X)$);
\item $q_2$ corresponding to $\{1 \otimes S_j\}_{j=1}^m$ (i.e.\ to $\{S_j\}_{j=1}^m \subset H^2(Y)$);
\item $t_1$ corresponding to $\{T_i \otimes 1\}_{i \in \{0,l+1,\dots,p\}}$ (i.e.\ to classes in $H^*(X)$ outside $H^2$);
\item $t_2$ corresponding to $\{1 \otimes S_j\}_{j \in \{0,m+1,\dots,r\}}$;
\item $t_{ij}$ corresponding to $T_i \otimes S_j$.
\end{itemize}

For derivatives, we use:
\begin{itemize}
\item $\partial_i$ for $\partial_{T_i \otimes 1}$ or equivalently $\partial_{T_i}$,
\item $\partial_j$ for $\partial_{1 \otimes S_j}$ or equivalently $\partial_{S_j}$,
\item $\partial_{ij}$ for $\partial_{T_i \otimes S_j}$.
\end{itemize}

For index sets, we write:
\begin{itemize}
\item $P = \{0,\dots,p\}$, $R = \{0,\dots,r\}$,
\item $L=\{1,\dots,l\}$, $M=\{1,\dots,m\}$.
\end{itemize}

Define $v_X(\beta_X)= \vdim\overline{\mathcal{M}}(X, \beta_X)+3$ and $v_Y(\beta_Y)= \vdim\overline{\mathcal{M}}(Y, \beta_Y)+3$. For bookkeeping of mixed insertions, let
\[
t^v = \sum_{ \substack{ (i,j) \in P \times R \setminus (L \cup M)\\ \sum_{i,j} v_{ij} \cdot \mathrm{codim}( T_{i}\otimes S_{j}) = v }}  
\prod_{i,j}\frac{t_{ij}^{v_{ij}}}{v_{ij}!},
\]
where
\[
\mathrm{codim}( T_{i}\otimes S_{j}) = 2(\dim X+\dim Y) - \deg T_i - \deg S_j.
\]

\begin{proposition} 
\label{prop:proddecomp}
The potential $\Gamma_{X \times Y}(q,t)$ satisfies
\[
\Gamma_{X \times Y}(q,t) 
= \Gamma_X(q_1,t_1)\, t^{v_Y(0)} \;+\; \Gamma_Y(q_2,t_2)\, t^{v_X(0)} \;+\; \text{mixed terms},
\]
where the ``mixed terms'' arise from summing over curve classes
\[
(\beta_X, \beta_Y) \in H_2(X) \oplus H_2(Y) \setminus (H_2(X) \cup H_2(Y)),
\]
i.e.\ classes where both components are nonzero.
\end{proposition}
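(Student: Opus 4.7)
The plan is to expand the defining series of $\Gamma_{X\times Y}$ and partition the sum over effective curve classes according to the K\"unneth decomposition of $H_2(X\times Y,\mathbb{Q})$. Writing $\beta=(\beta_X,\beta_Y)$, the sum splits into three disjoint pieces: (i) $\beta_Y=0$ with $\beta_X\neq 0$; (ii) $\beta_X=0$ with $\beta_Y\neq 0$; and (iii) both components nonzero. Piece (iii) is, by the very definition in the statement, the mixed-terms contribution, so the task reduces to showing that pieces (i) and (ii) yield $\Gamma_X(q_1,t_1)\,t^{v_Y(0)}$ and $\Gamma_Y(q_2,t_2)\,t^{v_X(0)}$ respectively. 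By symmetry, only piece (i) requires treatment.

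For piece (i), I would first apply Behrend's product formula (Theorem~\ref{thm:behrend_prod}) to each invariant $I^{X\times Y}_{(\beta_X,0)}$. The crucial input is that $\overline{\mathcal{M}}_n(Y,0)\cong \overline{\mathcal{M}}_n\times Y$ for $n\geq 3$, with all evaluation maps equal to the projection onto $Y$; pushing forward along the forgetful morphism $\eta\colon \overline{\mathcal{M}}_n(Y,0)\to \overline{\mathcal{M}}_n$ collapses $I^Y_0(\epsilon_1,\dots,\epsilon_n)$ to the scalar $\int_Y \epsilon_1\cdots \epsilon_n$ times the unit class of $\overline{\mathcal{M}}_n$. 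Hence the numerical product invariant factors as $\pm I^X_{\beta_X}(\gamma_1,\dots,\gamma_n)\cdot \int_Y \epsilon_1\cdots\epsilon_n$. Since $\beta_Y=0$, all $q_2$-exponents vanish and $q_2$ drops out of this piece entirely.

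Next I would classify the surviving insertions. An insertion of type $1\otimes S_j$ contributes the unit on the $X$-factor, so by the fundamental class axiom the resulting $X$-invariant vanishes whenever $\beta_X\neq 0$ and the number of marked points is at least three; consequently only insertions of type $T_i\otimes 1$ and truly mixed $T_i\otimes S_j$ survive. I would then reorganize the resulting double sum: the $X$-parts of the factorized invariants, together with the $q_1$- and $t_1$-weights, reassemble into $\Gamma_X(q_1,t_1)$, while the classical $Y$-integrals combine with the $t_{ij}$-weights into the generating series $t^{v_Y(0)}$. The codimension condition $\sum v_{ij}\,\mathrm{codim}(T_i\otimes S_j)=v_Y(0)$ is exactly the degree-matching requirement for $\int_Y \prod S_j^{v_{ij}}$ to land on the fundamental class of $Y$, which is why the natural exponent is $v_Y(0)=\vdim\overline{\mathcal{M}}(Y,0)+3$.

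The main obstacle will be the combinatorial bookkeeping in the final reorganization: one needs to track how the multinomial weights $1/\mathbf{n}!$ redistribute between the $\Gamma_X$ factor and the $t^{v_Y(0)}$ factor when mixed insertions $T_i\otimes S_j$ contribute simultaneously to both sides, and one needs to verify that the Behrend sign $(-1)^s$ from Theorem~\ref{thm:behrend_prod} cancels or is absorbed correctly after summing over orderings. A secondary care point is the handling of degenerate low-$n$ terms where the fundamental class axiom does not directly apply, and the boundary cases where $\overline{\mathcal{M}}_n(Y,0)$ must be described separately; however, the virtual-dimension identity established in the preceding lemma ensures that the codimension bookkeeping remains consistent and these boundary contributions do not disrupt the final formula.
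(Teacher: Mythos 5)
Your proposal is correct and follows essentially the same route as the paper: partition the sum over curve classes $(\beta_X,\beta_Y)$, apply Behrend's product formula (Theorem~\ref{thm:behrend_prod}), and invoke the string/fundamental-class equation to eliminate insertions pulled back from the factor whose curve class component vanishes -- this is precisely the paper's (even terser) argument, which likewise leaves the final reassembly into $\Gamma_X(q_1,t_1)\,t^{v_Y(0)}$ at the level of bookkeeping. The additional detail you supply, namely $\overline{\mathcal{M}}_n(Y,0)\cong \overline{\mathcal{M}}_n\times Y$ collapsing $I^Y_0$ to the classical integral $\int_Y \epsilon_1\cdots\epsilon_n$, is a correct elaboration consistent with the intended role of the factor $t^{v_Y(0)}$.
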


\begin{proof}
Recall the string equation (or fundamental class axiom) for Gromov--Witten invariants:
\[
I_{\beta}(\gamma_1\cdots\gamma_k \cdot 1)=0
\quad \text{unless $\beta=0$ and $k=2$.}
\]
Any variable $q_1$, resp. $q_2$, in $X \times Y$ corresponds to a divisor class of the form $T_i \otimes 1$, resp. $1 \otimes S_j$. By the string equation and the product formula (Theorem~\ref{thm:behrend_prod}), a Gromov--Witten invariant involving such a class vanishes unless $\beta_X$, resp. $\beta_Y$, is zero. This proves the decomposition.
\end{proof}

\begin{example}
Let $X = Y = \mathbb{P}^1$. Then
\[
\Gamma_X(q,t)=\Gamma_Y(q,t) = q,
\]
and
\[
\Gamma_{X \times Y}(q,t)
= q_1 t_p + q_2 t_p + \frac{q_1q_2t_p^3}{6}
+ \frac{q_1^2q_2t_p^5}{120}
+ \frac{q_1q_2^2t_p^5}{120}
+ \dots
\]
Here the product $q_1t_p \cdot q_2t_p$ corresponds to a class of codimension two, hence not a valid Gromov--Witten invariant. By contrast, the term $q_1q_2t_p^3$ has codimension zero and thus contributes.
\end{example}

\subsection{Obtaining codimension zero invariants}

\begin{proposition} 
\label{prop:firstderiv}
Let $i \in \{0,\dots,p\}$ and $j \in \{0,\dots,r\}$. Suppose the invariant corresponding to $(\beta,\gamma)$ is nonzero and contributes a term to the derivative $\partial_{ij} \Gamma_{X\times Y}$. Then there exists a term in $\partial_{i} \Gamma_X$ for some $\beta'_X \leq \beta_X$. Similarly, there exists a term in $\partial_{j}\Gamma_Y$ for some $\beta'_Y \leq \beta_Y$. 
\end{proposition}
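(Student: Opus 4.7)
The plan is to apply Behrend's product formula to factor the given Gromov--Witten invariant of $X \times Y$, and then to descend from each resulting nonvanishing factor class to an actual numerical invariant that contributes to the corresponding quantum potential.

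First I would unpack what the hypothesis says: a nonzero term in $\partial_{ij}\Gamma_{X\times Y}$ from a pair $(\beta,\gamma)$ means that $\gamma$ contains at least one copy of $T_i \otimes S_j$ and that the numerical Gromov--Witten invariant
\[
I^{X\times Y}_{\beta}\bigl(T_i\otimes S_j,\, \gamma_{\mathrm{rest}}\bigr)
\]
is nonzero, with $\gamma_{\mathrm{rest}}$ the remaining insertions, each of the form $T_{i_a}\otimes S_{j_a}$, and total number of insertions $k$. By Theorem~\ref{thm:behrend_prod}, this number equals, up to sign, the integral over $\overline{\mathcal{M}}_k$ of
\[
I^X_{\beta_X}\bigl(T_i,\,(T_{i_a})_a\bigr) \;\cup\; I^Y_{\beta_Y}\bigl(S_j,\,(S_{j_a})_a\bigr).
\]
Hence both factor classes are nonzero in $H^*(\overline{\mathcal{M}}_k)$, and in particular $\beta_X$ and $\beta_Y$ lie in their respective effective cones.

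The key step is then to convert the nonvanishing of the $X$-factor class into a nonzero term of $\partial_i \Gamma_X$ for some $\beta'_X \le \beta_X$. When that class is already codimension zero in $H^*(\overline{\mathcal{M}}_k)$, its integral is itself a nonzero numerical invariant, and, after absorbing any divisor insertion via the divisor axiom and any unit insertion via the string equation, it yields a term of $\partial_i \Gamma_X$ with $\beta'_X=\beta_X$. In the generic ``mixed'' regime $\beta_X,\beta_Y \neq 0$, however, both Behrend factors sit in positive codimension, so the naive $X$-integral vanishes and a descent via the splitting (WDVV) axiom is needed: restricting the nonzero class to a boundary stratum of $\overline{\mathcal{M}}_k$ with the marking carrying $T_i$ on a chosen side rewrites it as a sum over splittings $\beta_X=\beta_1+\beta_2$ of cup products of GW classes with fewer markings. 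Iterating until only three markings remain reduces the problem to three-point numerical invariants $I^X_{\beta'_X}(T_i,T_a,T_b)$ with $\beta'_X\le\beta_X$ in the effective cone; at least one such invariant must be nonzero, and, after the divisor/string reductions, it supplies the required term of $\partial_i\Gamma_X$. The argument for $\partial_j\Gamma_Y$ is entirely symmetric.

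The main obstacle is this descent step: one has to choose the boundary restrictions so that the marking carrying $T_i$ is preserved on the chosen side throughout, ensure the restricted class remains nonzero at every stage, and check that the resulting three-point invariant really feeds into $\partial_i\Gamma_X$, which in turn needs extra bookkeeping when $T_i$ is itself a divisor class or the unit. A minor caveat is that for fewer than three total insertions Behrend's theorem does not apply directly, so those edge cases would have to be handled by a separate direct analysis of the relevant low-point moduli spaces.
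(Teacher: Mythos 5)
Your proposal follows essentially the same route as the paper: apply Behrend's product formula (Theorem~\ref{thm:behrend_prod}) to get nonzero factor classes for $X$ and $Y$, then descend via the iterated splitting axiom to a nonzero codimension-zero invariant with $\beta'_X \leq \beta_X$, keeping track of the insertion dual to $T_i$. The descent step you flag as the main obstacle is exactly what the paper disposes of by citing the Kontsevich--Manin recursive reduction to codimension-zero invariants (Proposition~2.5.2 / Second Reconstruction Theorem), and for divisor classes $T_i$ the paper notes the coupling is realized through the curve class $\beta'_X$ (the Novikov exponent) rather than through a retained insertion, which matches your divisor-axiom bookkeeping.
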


\begin{proof}
We apply Theorem~\ref{thm:behrend_prod}, which describes how invariants of $X \times Y$ decompose into products of invariants from $X$ and from $Y$. If the invariant on the left-hand side is nonzero, then the corresponding invariants on the right-hand side must also be nonzero. Consequently, there are nonzero summands in the potentials of $X$ and $Y$ contributing to $\Gamma_X$ and $\Gamma_Y$, respectively. 

Consider the $X$-part, say $I_{\beta_X}^X(\gamma_1,\dots,\gamma_n)$. This class need not be of codimension zero. We can assume without loss of generality the the classes $\gamma_i$, $1 \leq i \leq n$ are homogeneous.

Recall that for every decomposition $I \sqcup J = \{1,\dots,n\}$ there is a gluing map
\[
\varphi_S: \overline{\mathcal{M}}_{|I|+1} \times \overline{\mathcal{M}}_{|J|+1} \to \overline{\mathcal{M}}_{n}.
\]
The splitting axiom for tree-level Gromov--Witten classes \cite[6.1.2]{kontsevich1994gromov} states that
\begin{equation}
\label{eq:splitting_axiom}
\varphi_S^{\ast}(I_{\beta}(\gamma_1,\dots,\gamma_n)) = 
\sum_{\beta_1 + \beta_2 = \beta} 
\sum_{e,f} 
I_{\beta_1}( \{ \gamma_c \}_{c \in I}, T_e ) \, g^{ef} \, I_{\beta_2}( T_f, \{ \gamma_d \}_{d \in J} ).
\end{equation}
Iterating this decomposition, we eventually obtain a nonzero Gromov--Witten invariant $I_{\beta'_X}(\gamma')$ of codimension zero with $\beta'_X \leq \beta_X$. Moreover, by construction, one of the insertions couples nontrivially with $T_i^\vee$. If $i \in \{0,l+1,\dots,p\}$ this is achieved through the cohomology class $\gamma'$, while if $1 \leq i \leq l$ it is achieved through the curve class $\beta'_X$. Namely, $T_i^{\vee}(\gamma') \neq 0$ if $i \in \{0,l+1,\dots,p\}$ or $T_i^{\vee}(\beta'_X) \neq 0$ if $1 \leq i \leq l$.
The existence of such a pair $(\beta'_X,\gamma')$ follows from the recursive reduction procedure leading to codimension zero invariants \cite[Proposition~2.5.2]{kontsevich1994gromov} (see also the Second Reconstruction Theorem). Thus $\partial_i \Gamma_X$ has a corresponding nonzero summand.

The argument for the $Y$-part is identical.
\end{proof}


To strengthen this analysis we need two auxiliary lemmas.
\begin{lemma}
\label{lem:effective_triv_int}
Let \( X \) be a smooth variety, and let \( D = \sum_{i=1}^n a_i D_i \) be an effective Cartier divisor on \( X \), where each \( D_i \) is an irreducible effective Cartier divisor and \( a_i > 0 \). Let \( V \subset X \) be a subvariety that intersects each \( D_i \) properly. If the intersection product \( V \cdot D = 0 \) in the Chow group \( A_*(X) \), then 
\[
V \cdot D_i = 0 \quad \text{in } A_*(X) \quad \text{for all } i.
\]
\end{lemma}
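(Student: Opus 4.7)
The plan is to combine the effectiveness of each refined intersection $V \cdot D_i$ with a positivity argument supplied by an ample class on $X$. Since $V$ meets each $D_i$ properly, the intersection product $V \cdot D_i$ is represented by the scheme–theoretic intersection $V \cap D_i$ equipped with its intrinsic multiplicities, i.e.\ by an effective cycle of pure dimension $\dim V - 1$. Multilinearity of intersection with Cartier divisors then gives
\[
V \cdot D \;=\; \sum_{i=1}^n a_i\, (V \cdot D_i) \qquad \text{in } A_*(X),
\]
and by hypothesis the left–hand side is zero.

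Next I would fix an ample class $H$ on $X$ (available, since the setting in which the lemma is applied is projective) and pair both sides with $H^{\dim V - 1}$. For each $i$ the number $H^{\dim V - 1} \cdot (V \cdot D_i)$ is the $H$-degree of an effective $0$-cycle, hence nonnegative, so the equation
\[
0 \;=\; H^{\dim V - 1} \cdot (V \cdot D) \;=\; \sum_{i=1}^n a_i\, \bigl( H^{\dim V - 1} \cdot (V \cdot D_i) \bigr),
\]
combined with $a_i > 0$, forces every summand to vanish. To conclude, one observes that an effective cycle of dimension $\dim V - 1$ whose top $H$-degree is zero must itself be the zero cycle: ampleness of $H$ implies $H^{\dim V - 1}\cdot [W] > 0$ for every irreducible component $W$ of the support, so a sum of strictly positive integers can vanish only if it is empty. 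Hence each $V \cdot D_i$ is represented by the empty cycle and vanishes in $A_*(X)$.

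The conceptual heart of the argument, and the main point one needs to supply, is this positivity input: positive combinations of nonzero effective classes should not collapse to zero in Chow. The ample polarization delivers that sharpness cleanly. In the absence of projectivity one could either reinterpret the hypothesis cycle–theoretically, in which case the conclusion is immediate because every intersection multiplicity is nonnegative and the total sum is zero component–by–component, or argue on $V$ directly by noting that $\iota^* D = \sum a_i\, \iota^* D_i$ is an effective Cartier divisor on $V$ whose pushforward to $X$ vanishes, and then applying the same ampleness pairing on $V$.
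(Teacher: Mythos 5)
Your argument is correct and follows the same skeleton as the paper's proof: bilinearity of intersection with Cartier divisors gives \(V\cdot D=\sum_i a_i\,(V\cdot D_i)\), and proper intersection makes each \(V\cdot D_i\) (the class of) an effective cycle of dimension \(\dim V-1\). Where you differ is the final step. The paper simply asserts that a sum of effective classes with positive coefficients can vanish in \(A_*(X)\) only if every summand vanishes; you justify this by pairing with \(H^{\dim V-1}\) for an ample \(H\), using that a nonzero effective cycle has strictly positive \(H\)-degree and that degree is a rational-equivalence invariant on a proper variety. This is a genuine improvement: sharpness of the effective monoid in the Chow group is not a formal fact for an arbitrary smooth variety (degree is not even defined, or not an invariant, when \(X\) is non-proper, and on complete non-projective threefolds of Hironaka type two irreducible curves can have numerically trivial sum), so some positivity input of exactly this kind is needed. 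The price is an extra hypothesis --- \(X\) projective, or at least proper with an ample class --- which the lemma as stated does not impose but which is satisfied where it is used, namely for boundary divisors in the projective moduli space \(\overline{\mathcal{M}}_{0,n}\). One caveat on your closing remarks: reinterpreting the hypothesis ``at the level of cycles'' changes the statement (the assumption is vanishing of the class in \(A_*(X)\), not of the cycle), so it is not an alternative proof of the lemma as written; the ample-pairing argument is the one to keep.
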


\begin{proof}
Since \( V \) intersects each \( D_i \) properly and each \( D_i \) is a Cartier divisor, the intersection product \( V \cdot D_i \) is well-defined in the Chow group and satisfies:
\[
V \cdot D = \sum_{i=1}^n a_i (V \cdot D_i).
\]
Because \( D \) is effective with \( a_i > 0 \), and \( V \cdot D_i \) is an effective cycle (or zero) due to proper intersection and effectiveness, each term \( V \cdot D_i \) lies in the monoid of effective cycles. Therefore, if 
\[
V \cdot D = 0,
\]
and all summands are non-negative, then each must vanish:
\[
V \cdot D_i = 0 \quad \text{for all } i.
\]
\end{proof}

\begin{lemma} 
\label{lem:ijtogether}
Let $I_{\beta}(\gamma_1,\dots,\gamma_n)$ be a nontrivial class of codimension $> 0$ and let $i,j \in \{1,\dots,n\}$. Then there exists a splitting $I \sqcup J = \{1,\dots,n\}$ such that $i,j \in I$ and $\varphi_S^{\ast} (I_{\beta}(\gamma_1,\dots,\gamma_n)) \neq 0$.
\end{lemma}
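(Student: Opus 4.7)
The plan is to proceed by contradiction, leveraging the projection formula together with Keel's theorem that $H^*(\overline{\mathcal{M}}_n)$ is generated as a ring by the boundary divisor classes $D_S$, and Keel's 4-point linear equivalence among them. Observe first that for $n \leq 3$ the moduli space $\overline{\mathcal{M}}_n$ has dimension $\leq 0$, so the hypothesis of a nontrivial class in positive codimension forces $n \geq 4$ throughout. Suppose, contrary to the claim, that $\varphi_S^{*}\!\bigl(I_\beta(\gamma_1,\dots,\gamma_n)\bigr) = 0$ for every splitting $S = I \sqcup J$ with $\{i,j\} \subset I$ (the case $\{i,j\} \subset J$ is symmetric). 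Since $\varphi_S$ factors through the boundary inclusion $\iota_S \colon D_S \hookrightarrow \overline{\mathcal{M}}_n$, the projection formula yields
\[
D_S \cdot I_\beta \;=\; (\iota_S)_{*}\bigl(\varphi_S^{*} I_\beta\bigr) \;=\; 0
\]
in $H^*(\overline{\mathcal{M}}_n)$ for every such $S$.

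Next, for each pair of auxiliary indices $k, l \in \{1,\dots,n\}\setminus\{i,j\}$, the forgetful map $\pi_{ijkl}\colon \overline{\mathcal{M}}_n \to \overline{\mathcal{M}}_4 \cong \mathbb{P}^1$ pulls back the linear equivalence of the three boundary points of $\overline{\mathcal{M}}_4$ to the Keel relation
\[
\sum_{\substack{S:\ i,j \in I \\ k,l \in J}} D_S
\;\equiv\;
\sum_{\substack{S:\ i,k \in I \\ j,l \in J}} D_S
\;\equiv\;
\sum_{\substack{S:\ i,l \in I \\ j,k \in J}} D_S
\]
in $H^2(\overline{\mathcal{M}}_n)$. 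By the previous step, the leftmost sum has zero intersection with $I_\beta$, so the other two sums do as well. Every boundary divisor $D_S$ that separates $i$ from $j$ occurs as a summand on the right-hand side for an appropriate choice $(k,l)$ with $k \in I$ and $l \in J$, so varying $(k,l)$ and combining the resulting equations yields, after a combinatorial manipulation, $D_S \cdot I_\beta = 0$ for every splitting $S$ whatsoever.

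Iterating the same analysis on higher intersection products $D_{S_1}\cdots D_{S_r} \cdot I_\beta$ and invoking Keel's generation theorem, we conclude that $I_\beta \cup \omega = 0$ for every $\omega \in H^*(\overline{\mathcal{M}}_n)$. Poincar\'e duality on the smooth proper stack $\overline{\mathcal{M}}_n$ then forces $I_\beta = 0$, contradicting the nontriviality hypothesis.

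The principal obstacle is the combinatorial extraction in the second paragraph: the Keel relations produce vanishings of \emph{sums} of intersections $D_S \cdot I_\beta$, not of each individual summand, so one must argue---either by a direct linear-algebra manipulation of the Keel relations as $(k,l)$ varies, or by an inductive reduction on $n$---that the resulting system suffices to isolate every separating divisor. A conceptually cleaner alternative is to invoke the Kontsevich--Manin second reconstruction theorem, which expresses any positive-codimension tree-level Gromov--Witten class as a combination of codimension-zero pieces via iterated applications of the splitting axiom, and to verify that the reconstruction can always be carried out while keeping the distinguished pair $\{i,j\}$ on the same side at every step.
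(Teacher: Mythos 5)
Your overall strategy is the same as the paper's: argue by contradiction, observe that the hypothesis kills the intersection of $I_{\beta}(\gamma_1,\dots,\gamma_n)$ with the total boundary class $D(ij|kl)$, transport this vanishing along Keel's linear equivalence to $D(ik|jl)$, and conclude that $I_\beta$ meets every boundary divisor trivially, which is incompatible with positive codimension. The genuine gap is exactly at the point you yourself flag as ``the principal obstacle'': from the vanishing of the sum of intersections over all splittings with $i,k$ on one side and $j,l$ on the other, no amount of linear algebra on the Keel relations alone will extract the vanishing of each individual summand --- a priori $I_\beta$ could pair with different boundary components with opposite signs, and the system of relations only constrains sums. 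Your proposed fallback, ``verify that the reconstruction can always be carried out while keeping $\{i,j\}$ on the same side at every step,'' is circular: that is precisely the statement of the lemma. The paper closes this gap with positivity rather than combinatorics: it proves an auxiliary statement (Lemma~\ref{lem:effective_triv_int}) that if an effective divisor $D=\sum a_i D_i$ with $a_i>0$ meets a cycle properly and the total intersection vanishes, then each $V\cdot D_i$ vanishes, because each term is an effective (or zero) cycle and a sum of effective cycles is zero only if every term is. It then invokes the fact that the Gromov--Witten class meets $D(ik|jl)$ properly (citing Kock) and that $D(ik|jl)$ is effective, so each boundary divisor separating $i$ from $j$ intersects $I_\beta$ trivially; varying $k,l$ gives the vanishing against all boundary divisors and hence the contradiction.

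Once that extraction is available, your way of finishing --- all products of boundary divisors with $I_\beta$ vanish, Keel's generation theorem gives $I_\beta\cup\omega=0$ for all positive-degree $\omega$, and Poincar\'e duality forces $I_\beta=0$ --- is sound, and is arguably a more careful conclusion than the paper's ``supported on the open stratum, hence codimension zero'' step. But as written the proposal does not prove the lemma: the effectivity/proper-intersection input is the missing idea, not an optional refinement.
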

\begin{proof}
Suppose by contradiction that for all splittings with $i,j \in I$ we have that $\varphi_S^{\ast} (I_{\beta}(\gamma_1,\dots,\gamma_n)) = 0$. Then the intersection of $I_{\beta}(\gamma_1,\dots,\gamma_n)$ with the boundary divisor $D(ij|kl)$ is zero:
\[ \sum_{\substack{I \sqcup J = \{1,\dots,n\} \\ i,j \in I, k,l \in J\\ \beta_1 + \beta_2 = \beta}} 
\sum_{e,f} 
I_{\beta_1}\left( \{ \gamma_c \}_{c \in I}, T_e \right) \, 
g^{ef} \, 
I_{\beta_2}\left( T_f, \{ \gamma_d \}_{d \in J} \right) = 0.\]
But then the intersection with the linearly equivalent divisor $D(ik|jl)$ is also zero
\[
\sum_{\substack{I' \sqcup J' = \{1,\dots,n\} \\ i,k \in I', j,l \in J'\\ \beta'_1 + \beta'_2 = \beta}} 
\sum_{e,f} 
I_{\beta'_1}\left( \{ \gamma_c \}_{c \in I'}, T_e \right) \, 
g^{ef} \, 
I_{\beta'_2}\left( T_f, \{ \gamma_d \}_{d \in J'} \right) =0.
\]
It is known that $I_{\beta}(\gamma_1,\dots,\gamma_n))$ intersects properly $D(ik|jl)$ (see e.g. \cite[Lemma~2.8.2]{kock2007invitation}) and that $D(ik|jl)$ is effective.
Then, by Lemma~\ref{lem:effective_triv_int}, these intersections must be all trivial.
As this is true for all $k,l$, we have that the intersection of $I_{\beta}(\gamma_1,\dots,\gamma_n)$ is trivial with all boundary divisors. Therefore, $I_{\beta}(\gamma_1,\dots,\gamma_n))$ is supported on the open stratum. That forces the codimension to be zero, contradicting the assumption. 

\end{proof}

\begin{proposition}
\label{prop:secondderiv}
Let $i_1,i_2 \in \{0,\dots,p\}$ and $j_1,j_2 \in \{0,\dots,r\}$. Suppose the invariant corresponding to $(\beta,\gamma)$ is nonzero and hence contributes a term to the derivative $\partial_{i_1j_1} \partial_{i_2j_2} \Gamma_{X\times Y}$. 
Then there exists a term in $\partial_{i_1} \partial_{i_2}\Gamma_X$ for some $\beta'_X \leq \beta_X$. Similarly, there exists a term in $\partial_{j_1}\partial_{j_2} \Gamma_Y$ for some $\beta'_Y \leq \beta_Y$. 
\end{proposition}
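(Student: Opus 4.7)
The plan is to adapt the reconstruction argument of Proposition~\ref{prop:firstderiv}, tracking two pairs of indices rather than one; the key new tool is Lemma~\ref{lem:ijtogether}, which guarantees that any two marked points can be kept on the same side of a nontrivial boundary splitting. First, applying Theorem~\ref{thm:behrend_prod} to the nonzero contribution at $(\beta,\gamma)$ gives a factorization
\[
I^{X\times Y}_{\beta}(\cdots)\;=\;\pm\,I^X_{\beta_X}(\gamma_X)\,\cup\,I^Y_{\beta_Y}(\gamma_Y),
\]
with both factors nonzero; I focus on the $X$-side, the $Y$-side being symmetric. To treat divisor and non-divisor indices uniformly, I use the divisor equation to promote each divisor-derivative factor $\int_{\beta_X}T_{i_k}$ (arising when $i_k \in L$) into an explicit insertion $T_{i_k}$. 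Since $T_{i_k}$ has degree $2$, this addition preserves the codimension of the class, producing a nonzero class
\[
I^X_{\beta_X}(T_{i_1},\,T_{i_2},\,\widetilde\gamma_X),
\]
in which both $T_{i_1}$ and $T_{i_2}$ appear as genuine insertions.

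Second, I iterate the splitting reduction. At each step, if the current nonzero class has positive codimension, apply Lemma~\ref{lem:ijtogether} to the two marked points carrying $T_{i_1}$ and $T_{i_2}$: this produces a splitting $I \sqcup J$ with $i_1,i_2 \in I$ for which $\varphi_S^{\ast}$ is nonzero. The splitting axiom~\eqref{eq:splitting_axiom} then yields $\beta_1+\beta_2=\beta_X$ and basis elements $T_e,T_f$ with $g^{ef}\neq 0$ such that $I^X_{\beta_1}(T_{i_1},T_{i_2},\ldots,T_e)$ is nonzero, and I replace the current class by this one, preserving both distinguished insertions. After finitely many iterations the codimension drops to zero, giving a nonzero codimension-zero invariant $I^X_{\beta'_X}(T_{i_1},T_{i_2},\gamma')$ with $\beta'_X \le \beta_X$. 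Running the divisor equation in reverse extracts, for each $i_k \in L$, a factor $\int_{\beta'_X}T_{i_k}$, which must be positive by the nonvanishing just established; the residual codimension-zero invariant, with the remaining non-divisor $T_{i_k}$ intact, is exactly a nonzero summand of $\partial_{i_1}\partial_{i_2}\Gamma_X$ at $\beta'_X$. The $Y$-side is handled identically.

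The main obstacle is the middle step above: an uncontrolled iteration of splittings could easily separate $i_1$ from $i_2$ and thereby lose exactly the information that the proposition asserts is preserved. Lemma~\ref{lem:ijtogether} was proved precisely to prevent this, and once it is in hand the extension of Proposition~\ref{prop:firstderiv} to two derivatives is essentially routine.
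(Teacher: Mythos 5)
Your proposal is correct and follows essentially the same route as the paper's proof: reduce to one factor via Behrend's product formula, convert divisor-type indices into explicit insertions with the divisor axiom, and run the splitting/reconstruction recursion while invoking Lemma~\ref{lem:ijtogether} at each step to keep the two distinguished insertions on the same side, terminating in a codimension-zero invariant contributing to $\partial_{i_1}\partial_{i_2}\Gamma_X$. The only (harmless) difference is presentational: you treat both divisor indices uniformly and undo the divisor equation at the end, whereas the paper splits into cases according to whether $T_{i_2}$ is a divisor class.
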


\begin{proof}
We again focus only on the $X$ part; the argument for the $Y$-part is the same.  

Assume first that $T_{i_2}$ is not a divisor class. Let $i, j$ be indices such that $T_{i_1}^{\vee}(\gamma_i)\neq 0$ and $T_{i_2}^{\vee}(\gamma_j)\neq 0$.
The recursion process invoked in the proof of Proposition~\ref{prop:firstderiv} partitions the index set $\{1,\dots,n\}$. We can think about it as a tree (graph) with a partition in each of the nodes and codimension zero classes in the leaves. 
By Lemma~\ref{lem:ijtogether} we can ensure that at each step $i$ and $j$ end up in the same part.


Suppose now that $T_{i_2}$ is a divisor class. 
Using the divisor axiom \cite[2.2.4]{kontsevich1994gromov}, we replace $I_{\beta}(\gamma_1,\dots,\gamma_n)$ with 
\[ \frac{1}{\int_{\beta}T_{i_2}} \cdot I_{\beta}(\gamma_1,\dots,\gamma_n,T_{i_2}).\] 
We can do this because $\int_{\beta}T_{i_2} \neq 0$ by the assumptions.
By the above argument, we can enforce that $T_{i_2}$ stays in the same component as $\gamma_i$.

\end{proof}

\begin{lemma}
\label{lem:codimzeroglue}
Let \( X \) be a smooth projective variety of dimension \( d \). Let \( \gamma_1, \dots, \gamma_n, \gamma'_1, \dots, \gamma'_m \in H^*(X, \mathbb{Q}) \) be cohomology classes. Let \( \{T_e\} \) and \( \{T^e\} \) be dual bases of \( H^*(X, \mathbb{Q}) \) with respect to the Poincar\'e pairing, so that \( \langle T_e, T^f \rangle = \delta_e^f \). Let moreover \( \beta_1, \beta_2 \in H_2(X, \mathbb{Z}) \) be arbitrary curve classes. If the two Gromov--Witten classes
\[
I_{\beta_1}(\gamma_1, \dots, \gamma_n, T_e) \in H^*(\overline{M}_{0,n+1}(X,\beta_1), \mathbb{Q})
\]
and
\[
I_{\beta_2}(T^e, \gamma'_1, \dots, \gamma'_m) \in H^*(\overline{M}_{0,m+1}(X,\beta_2), \mathbb{Q}).
\]
are nontrivial, then their fiber product over the evaluation maps at the insertions \( T_e \) and \( T^e \) yields a nonzero contribution to the class
\[
I_{\beta_1 + \beta_2}(\gamma_1, \dots, \gamma_n, \gamma'_1, \dots, \gamma'_m) \in H^*(\overline{M}_{0,n+m}(X, \beta_1 + \beta_2), \mathbb{Q})
\]
of codimension \( d = \dim X \). By inserting additional cohomology classes of total degree \( d \), one can promote the resulting class to codimension zero.
\end{lemma}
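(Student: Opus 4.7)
The plan is to invoke the splitting axiom for tree-level Gromov--Witten classes, equation~\eqref{eq:splitting_axiom}, applied to the gluing map
\[
\varphi: \overline{\mathcal{M}}_{0,n+1} \times \overline{\mathcal{M}}_{0,m+1} \to \overline{\mathcal{M}}_{0,n+m}
\]
that attaches the $(n+1)$-st marked point of the first factor to the first marked point of the second. Pulling back $I_{\beta_1+\beta_2}(\gamma_1,\dots,\gamma_n,\gamma'_1,\dots,\gamma'_m)$ under $\varphi$ decomposes it as a sum, indexed by effective splittings $\beta'_1+\beta'_2=\beta_1+\beta_2$, of pairings $\sum_{e,f} I_{\beta'_1}(\gamma_1,\dots,\gamma_n, T_e)\, g^{ef}\, I_{\beta'_2}(T_f, \gamma'_1,\dots,\gamma'_m)$. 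Using the standard identity $\sum_{e,f} T_e \otimes g^{ef}\, T_f = [\Delta_X] \in H^{2d}(X \times X)$, the summand for the chosen $(\beta_1,\beta_2)$ coincides with the fiber product class of the statement: on $\overline{\mathcal{M}}_{0,n+1}(X,\beta_1)\times\overline{\mathcal{M}}_{0,m+1}(X,\beta_2)$ it is the pullback of $[\Delta_X]$ along the combined evaluation $(\mathrm{ev}_{n+1},\mathrm{ev}_1)$, intersected with the external product of the two factor classes.

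For the nonvanishing, two ingredients suffice. First, by hypothesis both factor classes are nontrivial, so their external product is nonzero on the direct product of moduli spaces by K\"unneth. Second, intersection with $[\Delta_X]$ realizes the nondegenerate Poincar\'e pairing on $X$ at the gluing points, and so cannot annihilate a K\"unneth class with nontrivial components. The codimension-$d$ statement is then immediate from the fact that $[\Delta_X]$ lies in $H^{2d}(X\times X)$ and hence has complex codimension $d$, which is precisely what the fiber product construction contributes relative to the direct product.

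Promotion to codimension zero follows by a dimension count using the virtual dimension formula from the preliminary section: each additional marked point decorated with a cohomology insertion of complex codimension $c$ changes the codimension on $\overline{\mathcal{M}}_N$ by $1-c$, so introducing additional insertions of suitably chosen total degree shifts the codimension by $d$ downward to zero. The main technical obstacle I anticipate is not in any individual step, but in isolating the $(\beta_1,\beta_2)$-summand from the full splitting sum so as to conclude that it is a genuinely nonzero contribution rather than one that cancels against competing decompositions. I would handle this by passing to the Novikov-completed quantum potential: distinct decompositions $(\beta'_1,\beta'_2)$ contribute monomials of distinct bidegrees $q^{\beta'_1} \cdot q^{\beta'_2}$ in the Novikov variables, so terms from different decompositions cannot cancel, and the chosen summand survives as a nonzero contribution to $I_{\beta_1+\beta_2}(\gamma_1,\dots,\gamma'_m)$.
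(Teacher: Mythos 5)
Your geometric setup coincides with the paper's: gluing at the last/first marked points, identifying the pairing over dual bases with the diagonal class $\sum_{e,f} T_e \otimes g^{ef}\, T_f = [\Delta_X]$, reading off the codimension shift $d$ from $[\Delta_X] \in H^{2d}(X \times X)$, and a dimension count for the extra insertions. The difference is the direction of the argument. The paper \emph{constructs} the contribution: it pushes forward the fiber product of the two given (nontrivial) classes along the boundary stratum of $\overline{M}_{0,n+m}(X,\beta_1+\beta_2)$, and the phrase ``nonzero contribution'' refers to this constructed boundary term, whose nonvanishing is asserted from the nontriviality of the factors and the nondegeneracy of the Poincar\'e pairing. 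You instead pull back $I_{\beta_1+\beta_2}(\gamma_1,\dots,\gamma'_m)$ via the splitting axiom \eqref{eq:splitting_axiom} and try to isolate the $(\beta_1,\beta_2)$-summand from the sum over all decompositions.

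That isolation step is where your proposal has a genuine gap, and you correctly flagged it as the main obstacle — but the fix you propose does not work. For curve classes on the single variety $X$, the Novikov monomial attached to a decomposition is $q^{\beta'_1}\cdot q^{\beta'_2} = q^{\beta'_1+\beta'_2} = q^{\beta_1+\beta_2}$, i.e.\ it depends only on the fixed total class; there is no ``bidegree'' distinguishing $(\beta'_1,\beta'_2)$ from $(\beta_1,\beta_2)$, since the entire splitting sum in \eqref{eq:splitting_axiom} sits inside the coefficient of the single monomial $q^{\beta_1+\beta_2}$ of the potential. So different decompositions can a priori cancel in the pullback, and your argument does not show that the summand you want survives. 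If you insist on the pullback route, you would need a genuine non-cancellation input (for instance an effectivity/positivity argument in the spirit of Lemma~\ref{lem:effective_triv_int}, which the paper uses for exactly this kind of issue in Lemma~\ref{lem:ijtogether}), not Novikov bookkeeping. A secondary weakness: the claim that capping with the pullback of $[\Delta_X]$ along $(\mathrm{ev}_{n+1},\mathrm{ev}_1)$ ``cannot annihilate a K\"unneth class with nontrivial components'' is not justified as stated — the evaluation maps need not behave well enough for this — although the paper's own nonvanishing assertion is admittedly at a comparable level of informality.
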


\begin{proof}
The classes \( I_{\beta_1}(\gamma_1, \dots, \gamma_n, T_e) \) and \( I_{\beta_2}(T^e, \gamma'_1, \dots, \gamma'_m) \) live on the moduli spaces \( \overline{M}_{0,n+1}(X,\beta_1) \) and \( \overline{M}_{0,m+1}(X,\beta_2) \), respectively. The insertions \( T_e \) and \( T^e \) are paired via evaluation maps at the last marked points and the Poincar\'e pairing on \( H^*(X) \).

Gluing these two moduli spaces along the evaluation maps defines a boundary stratum of \( \overline{M}_{0,n+m}(X, \beta_1 + \beta_2) \), and the corresponding contribution is given by the fiber product over \( X \). Since the diagonal in \( X \times X \) has codimension \( d \), the resulting pushforward to \( \overline{M}_{0,n+m}(X, \beta_1 + \beta_2) \) increases the codimension by \( d \).

Therefore, the glued class lies in codimension \( d \) relative to the virtual dimension of \( \overline{M}_{0,n+m}(X, \beta_1 + \beta_2) \). This contribution is nonzero if the individual Gromov--Witten classes and the pairing \( \langle T_e, T^e \rangle \) are nonzero.

To obtain a codimension zero (numerical) invariant, one may insert additional cohomology classes whose total degree equals \( d \), thereby compensating for the excess codimension introduced by gluing. An example which works in any setting is $T_p$, the dual cohomology class of a point.
\end{proof}

\begin{proposition}
Let $i_1,i_2,i_3 \in \{0,\dots,p\}$ and $j_1,j_2,j_3 \in \{0,\dots,r\}$. Suppose the invariant corresponding to $(\beta,\gamma)$ is nonzero and contributes a term to the derivative $\partial_{i_1j_1} \partial_{i_2j_2} \partial_{i_3j_3}\Gamma_{X\times Y}$. 
Then there exists a term in $\partial_{i_1} \partial_{i_2}\partial_{i_3}\Gamma_X$ for some $\beta'_X \leq \beta_X$. Similarly, there exist a term in $\partial_{j_1}\partial_{j_2}\partial_{j_3} \Gamma_Y$ for some $\beta'_Y \leq \beta_Y$. 
\end{proposition}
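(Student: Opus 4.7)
The approach parallels Propositions~\ref{prop:firstderiv} and~\ref{prop:secondderiv}. By Theorem~\ref{thm:behrend_prod} the nonzero $X \times Y$ invariant factors as a product of a nonzero $I^{X}_{\beta_X}(\gamma_X)$ and a nonzero $I^{Y}_{\beta_Y}(\gamma_Y)$, so I focus on the $X$-factor. For every divisor index among $i_1, i_2, i_3$ the hypothesis that the corresponding derivative survives forces $\int_{\beta_X} T_{i_k} \neq 0$, so by the divisor axiom I may absorb each such $T_{i_k}$ into an extra insertion of $I^X_{\beta_X}$ while preserving nontriviality. After this step each of $T_{i_1}, T_{i_2}, T_{i_3}$ corresponds to a specific marked insertion in the class.

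The next step is to run the recursive splitting-axiom reduction of Proposition~\ref{prop:firstderiv}, decomposing $I^{X}_{\beta_X}$ as a sum of products of codimension-zero ``leaves'' on a tree of splittings. I would fix one nonzero summand; in it every leaf is a nonzero codimension-zero invariant, and the three marked points carrying $T_{i_1}, T_{i_2}, T_{i_3}$ are distributed among the leaves. Applying Lemma~\ref{lem:ijtogether} iteratively to the three pairs $(i_1, i_2), (i_1, i_3), (i_2, i_3)$ at each splitting step, I try to drive all three marks into a single leaf; if successful, that leaf directly provides the desired codimension-zero invariant on some $\beta'_X \leq \beta_X$.

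If the three insertions are forced to separate across several leaves, I reassemble them via Lemma~\ref{lem:codimzeroglue}: each gluing of two leaves over a dual basis pair $(T_e, T^e)$ produces a nonzero class of codimension $d = \dim X$, which is pushed back to codimension zero by inserting a point class $T_p$ (of degree $d$). Since Lemma~\ref{lem:codimzeroglue} guarantees nonvanishing at each gluing step, the iterated procedure terminates in a nonzero codimension-zero invariant $I^{X}_{\beta'_X}(T_{i_1}, T_{i_2}, T_{i_3}, \dots)$. The curve classes of the leaves sum to $\beta_X$ along the tree, so $\beta'_X \leq \beta_X$. Converting any originally divisor index back to a $q$-factor via the divisor axiom then yields a nonvanishing term of $\partial_{i_1}\partial_{i_2}\partial_{i_3}\Gamma_X$, and the same argument symmetrically produces the required term of $\partial_{j_1}\partial_{j_2}\partial_{j_3}\Gamma_Y$.

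The hard part is that Lemma~\ref{lem:ijtogether} only controls pairs, and its three-index analogue cannot hold in full generality: already for $n \leq 4$ no boundary divisor of $\overline{M}_{0,n}$ can place three given marks on the same side, and even for $n \geq 5$ the WDVV-type linear equivalences used in the proof of Lemma~\ref{lem:ijtogether} do not directly isolate a single ``three-together'' divisor. The role of Lemma~\ref{lem:codimzeroglue} in this plan is precisely to bypass that combinatorial obstruction by re-gluing the relevant codimension-zero leaves diagonally after the tree decomposition, at the cost of introducing extra point-class insertions.
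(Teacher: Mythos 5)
Your proposal is correct and follows essentially the same route as the paper: reduce to the $X$-factor via Theorem~\ref{thm:behrend_prod} and the divisor axiom, run the splitting recursion keeping insertions paired via Lemma~\ref{lem:ijtogether}, and then reassemble the separated codimension-zero leaves along dual-pair insertions using Lemma~\ref{lem:codimzeroglue}. Your observation that a three-index analogue of Lemma~\ref{lem:ijtogether} cannot hold and that the gluing lemma is precisely the workaround is exactly the point the paper's proof makes (it keeps two insertions together and reattaches the third by gluing a chain of leaves).
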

\begin{proof}
Let $u, v, w$ be indices such that $T_{i_1}^{\vee}(\gamma_u)\neq 0$, $T_{i_2}^{\vee}(\gamma_v)\neq 0$ and $T_{i_3}^{\vee}(\gamma_w)\neq 0$. In general, it is not guaranteed that the recursive algorithm as tweaked in Proposition~\ref{prop:secondderiv} gives a codimension zero invariant which contains $\gamma_u$, $\gamma_v$ and $\gamma_w$ simultaneously. But it always give a collection of invariants, from which we can select a sequence
\[
I_{\beta_1}(\gamma_u,\gamma_v,\dots,T_e), \quad 
I_{\beta_2}(T^e,\dots,T_f), \quad 
\dots, \quad
I_{\beta_s}(T^g,\dots,\gamma_w)
\]
where each consecutive pair involves a dual pair $(T_e,T^e)$, ensuring compatibility under gluing. Each of these classes is of codimension zero. By Lemma~\ref{lem:codimzeroglue}, we may glue these contributions inductively to obtain a nonzero codimension zero invariant involving all three insertions, yielding the required summand.
\end{proof}

\begin{corollary} 
\label{cor:noleadingmixed}
For every triple of indices $(a,b,c)$, no leading term of $\Gamma_{abc}^{X \times Y}$ arises from mixed terms. 
\end{corollary}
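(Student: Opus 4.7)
The plan is to argue by contradiction, using the three preceding derivative propositions, which together control how a mixed Gromov--Witten invariant of $X \times Y$ forces the existence of companion contributions on the factors. Suppose that the leading summand of $\Gamma^{X\times Y}_{abc}$ — leading in the componentwise partial order on the Novikov monomials $q_1^{\beta_X} q_2^{\beta_Y}$ — arose from a mixed curve class, i.e.\ one with both $\beta_X \neq 0$ and $\beta_Y \neq 0$.

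First I would write each of the indices $a,b,c$ in its Künneth form $(a_X, a_Y)$, $(b_X, b_Y)$, $(c_X, c_Y)$, so that $\partial_a\partial_b\partial_c$ matches the notation $\partial_{i_1 j_1}\partial_{i_2 j_2}\partial_{i_3 j_3}$ used in the preceding proposition on triple derivatives. Applying that proposition to the putative contributing mixed invariant at $(\beta_X, \beta_Y)$ yields nonzero summands of $\partial_{a_X}\partial_{b_X}\partial_{c_X}\Gamma_X$ at some $\beta'_X \le \beta_X$ and of $\partial_{a_Y}\partial_{b_Y}\partial_{c_Y}\Gamma_Y$ at some $\beta'_Y \le \beta_Y$. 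The second step is to promote the nonzero $X$-summand into a bidegree $(\beta'_X, 0)$ summand of $\Gamma^{X\times Y}_{abc}$ itself. For this I would use Behrend's product formula (Theorem~\ref{thm:behrend_prod}): from a nonzero $I^{X}_{\beta'_X}(\gamma_1,\dots,\gamma_n)$ we assemble a product invariant $\pm\, I^{X}_{\beta'_X}(\gamma)\cdot I^{Y}_0(\epsilon)$ on $X\times Y$ at curve class $(\beta'_X, 0)$, choosing the $Y$-insertions $\epsilon_i$ so that their classical cup product on $Y$ is nonzero and so that the $Y$-components $a_Y, b_Y, c_Y$ pick out these insertions with nonzero coefficient. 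Because $\beta_Y \neq 0$, the Novikov bidegree $(\beta'_X, 0)$ is componentwise strictly smaller than $(\beta_X, \beta_Y)$, which contradicts the assumed minimality of the mixed term.

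The main obstacle, and the step I would linger on, is the matching in the promotion step. When one or more of $a, b, c$ are genuinely mixed indices (with neither $i_k$ nor $j_k$ zero), the summand $\Gamma_X(q_1, t_1)$ of the decomposition in Proposition~\ref{prop:proddecomp} cannot produce $t_{i_k j_k}$ factors on its own, so the derivative is forced to act through the bookkeeping factor $t^{v_Y(0)}$ in that decomposition. Here I would appeal to Lemma~\ref{lem:codimzeroglue} to insert auxiliary $Y$-classes of total codimension equal to $\dim Y$, thereby producing a genuine codimension-zero invariant on $X\times Y$ whose insertions match the required $(a,b,c)$ and survive $\partial_a\partial_b\partial_c$. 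Once this matching is set up, the componentwise strict inequality $(\beta'_X, 0) < (\beta_X, \beta_Y)$ delivers the contradiction immediately, and the symmetric argument involving $\beta'_Y$ and the pure-$Y$ factor gives an alternative but equally effective lower-order witness.
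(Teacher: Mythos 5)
Your overall strategy (contradiction plus the triple-derivative proposition) matches the paper's intent, but the paper in fact offers no separate argument for this corollary: it is read off directly from the preceding proposition, in the sense that a mixed contribution at $(\beta_X,\beta_Y)$ with $\beta_X\neq 0\neq\beta_Y$ always forces unmixed terms of $\partial_{i_1}\partial_{i_2}\partial_{i_3}\Gamma_X$ at some $\beta'_X\le\beta_X$ and of $\partial_{j_1}\partial_{j_2}\partial_{j_3}\Gamma_Y$ at some $\beta'_Y\le\beta_Y$, which are of strictly smaller $q$-order because the Novikov exponent of the other factor is dropped. In other words, the comparison underlying ``leading'' is against the factor potentials, i.e.\ against the Kronecker-sum part $K_X\otimes\mathrm{id}+\mathrm{id}\otimes K_Y$ as it is used in Proposition~\ref{prop:prod_euler_lead}, and not against other summands of the same series $\Gamma^{X\times Y}_{abc}$.

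The step where your proposal genuinely breaks is the ``promotion'' of the $X$-witness to a term of the \emph{same} entry $\Gamma^{X\times Y}_{abc}$ at bidegree $(\beta'_X,0)$. At a curve class with $\beta_Y=0$, Behrend's formula (Theorem~\ref{thm:behrend_prod}) forces the $Y$-factor to be purely classical: $I^Y_0(\epsilon_1,\dots,\epsilon_k)=\bigl(\int_Y\epsilon_1\cup\cdots\cup\epsilon_k\bigr)\cdot 1$. The three $Y$-insertions $S_{a_Y},S_{b_Y},S_{c_Y}$ are dictated by the differentiation indices $a,b,c$ and cannot be chosen; if $S_{a_Y}\cup S_{b_Y}\cup S_{c_Y}=0$ in $H^*(Y)$, then the integral vanishes for \emph{every} choice of auxiliary insertions (a zero cup product stays zero after multiplying by more classes), while padding with classes of the form $1_X\otimes\epsilon$ kills the $X$-side numerical invariant by the fundamental class axiom; Lemma~\ref{lem:codimzeroglue} does not help, since gluing only adjusts codimension on the $X$-side and does not alter the vanishing classical product on $Y$. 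The paper's own example makes this concrete: for $X=Y=\mathbb{P}^1$ and $a=b=c$ the point class, $\Gamma^{X\times Y}_{t_pt_pt_p}=q_1q_2+\cdots$ contains no unmixed summand at all (and the symmetric witness at $(0,\beta'_Y)$ fails for the same reason), so a same-entry lower-order term simply does not exist and your contradiction cannot be completed. So your entry-wise strengthening is not available; the corollary has to be understood, as in its application to Proposition~\ref{prop:prod_euler_lead} and Theorem~\ref{thm:main1_eiglim}, as the statement that every mixed term is dominated in $q$-order by unmixed terms supplied by the factors $X$ and $Y$ themselves.
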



\section{Quantum D-modules}
\label{sec:opK}

\subsection{Structure of $K$}
\label{subsec:Kstr}

We now investigate the asymptotic behaviour of the spectrum of the Dubrovin connection when $q$ is in the neighborhood of the origin, with $t$ fixed such that $|t| \ll \infty$. Our focus is on the leading (lowest order) terms in the connection matrix with respect to the $q$-variables, while keeping $t \neq 0$ fixed.

\begin{defn}
The \emph{degree} or, interchangeably, the \emph{order} of a summand in $K_{ij}$ is defined as the sum of the exponents of the $q$-variables in that summand.
If a summand $q^{\beta} t^\gamma$ attains the minimal order among all summands of $K_{ij}$, we say that $(\beta,\gamma)$ is \emph{minimal at $ij$}.
\end{defn}


\begin{proposition}
\label{prop:prod_euler_lead}
Let $X$ and $Y$ be smooth projective varieties, and let $K_X$, $K_Y$ denote the quantum multiplication operators with their respective Euler vector fields. Then the quantum multiplication operator $K_{X \times Y}$ with the Euler vector field on $X \times Y$ satisfies
\[
K_{X \times Y} = K_X \otimes \mathrm{id} + \mathrm{id} \otimes K_Y + \text{(higher order terms)}.
\]
In particular, the minimal order terms of $K_{ij}$ arise from summands of $(K_X \otimes \mathrm{id})_{ij}$ or $(\mathrm{id} \otimes K_Y)_{ij}$, for $(i,j) \in P \times R$.
\end{proposition}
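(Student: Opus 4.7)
The plan is to decompose $K_{X\times Y}$ using the K\"unneth structure of $H^*(X\times Y)$ and then to separate the contributions to each matrix entry according to the type of curve class they arise from: purely on $X$, purely on $Y$, or genuinely mixed. The first two types will be shown to reproduce $K_X\otimes \mathrm{id}$ and $\mathrm{id}\otimes K_Y$ respectively, while the mixed type will be shown to contribute only at strictly higher $q$-order.

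First, by the K\"unneth formula $c_1(T_{X\times Y}) = c_1(T_X)\otimes 1 + 1\otimes c_1(T_Y)$, so that the first-Chern-class summand of the Euler vector field of $X\times Y$ is the sum of the corresponding summands of the Euler fields of $X$ and $Y$; similarly, the $t$-dependent part matches summand-by-summand on the pure tensors $T_i\otimes 1$ and $1\otimes S_j$. By linearity of quantum multiplication, $K_{X\times Y}$ splits accordingly. At $q=\vec{0}$, the classical cup product respects the tensor decomposition, e.g.\ $(c_1(T_X)\otimes 1)\cup (T_a\otimes S_b) = (c_1(T_X)\cup T_a)\otimes S_b$, which is exactly the classical part of $(K_X\otimes \mathrm{id})(T_a\otimes S_b)$, and symmetrically for $Y$.

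Next, I would analyze the quantum corrections via Definition~\ref{def:quantum_product} combined with Proposition~\ref{prop:proddecomp}. The latter gives
\[
\Gamma_{X\times Y}(q,t) = \Gamma_X(q_1,t_1)\,t^{v_Y(0)} + \Gamma_Y(q_2,t_2)\,t^{v_X(0)} + (\text{mixed}),
\]
so the third derivatives that furnish the structure constants of the quantum product split as a sum of pure-$X$, pure-$Y$, and mixed pieces. Since the Poincar\'e pairing on $X\times Y$ factors as $g_X\otimes g_Y$, contracting the pure-$X$ part against the dual basis yields precisely the entries of the quantum correction of $K_X$ tensored with $\mathrm{id}$, and symmetrically for the pure-$Y$ part. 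Thus these two pure pieces assemble into $K_X\otimes \mathrm{id} + \mathrm{id}\otimes K_Y$.

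Third, and most delicate, I would show that the mixed summands are of strictly higher $q$-order in each matrix entry. By Theorem~\ref{thm:behrend_prod}, any nonzero mixed Gromov--Witten invariant factors as $I^X_{\beta_X}(\cdots)\cdot I^Y_{\beta_Y}(\cdots)$ with both $\beta_X$ and $\beta_Y$ effective and nonzero; the associated monomial in $K_{ij}$ therefore carries at least one positive $q_1$-exponent \emph{and} at least one positive $q_2$-exponent. A pure-$X$ summand, on the other hand, has $q_2$-exponent zero, and conversely for pure-$Y$. Because the only curve classes appearing in $\Gamma$ lie in the effective cone $B$, the $q$-exponents are nonnegative and no cancellation between classes of differing $q$-bidegree can occur. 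Hence the total $q$-order of any mixed monomial strictly exceeds that of any pure-$X$ or pure-$Y$ summand in the same matrix entry, and the minimal-order terms of $K_{ij}$ are precisely those of $(K_X\otimes \mathrm{id})_{ij}$ or $(\mathrm{id}\otimes K_Y)_{ij}$.

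The main obstacle is ensuring that the matrix-entry identifications are clean: one must check that contracting against $g^{ef}$ on $X\times Y$ really produces the tensor structure above, and that the pure-$X$ and pure-$Y$ minimal-order terms live in disjoint $q$-bidegree ranges so that they cannot cancel each other. Both points are guaranteed by the K\"unneth factorization of the Poincar\'e pairing and by the fact that $q_1$- and $q_2$-exponents separate the effective classes of $X$ from those of $Y$.
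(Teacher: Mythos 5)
Your first two steps (the Künneth/Euler-field splitting and the identification of the pure-$X$ and pure-$Y$ parts of the potential with $K_X\otimes\mathrm{id}$ and $\mathrm{id}\otimes K_Y$ via the factorization of the Poincar\'e pairing) match the paper's use of Proposition~\ref{prop:proddecomp}. The gap is in your third step, which is exactly where the real work lies. You claim that ``the total $q$-order of any mixed monomial strictly exceeds that of any pure-$X$ or pure-$Y$ summand in the same matrix entry,'' and you justify it only by noting that a mixed term has positive $q_1$- and $q_2$-exponents while a pure term involves only one group of variables. That inference is invalid for the paper's notion of order, which is the \emph{total} degree in all $q$-variables (see the definition in Section~\ref{subsec:Kstr}): a mixed term of class $(\beta_X,\beta_Y)$ with small $|\beta_X|+|\beta_Y|$ can have strictly lower total order than a pure term $q_1^{\beta'_X}$ with large $|\beta'_X|$ sitting in the same entry, and a priori an entry might even contain no pure term at all at low order. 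Having disjoint $q$-bidegrees rules out cancellation, but it does not rule out a mixed term being the minimal-order summand.

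What is actually needed, and what the paper spends Section~\ref{sec:gwinvprod} proving, is a comparison statement: whenever a mixed class $(\beta_X,\beta_Y)$ with $\beta_X\neq0\neq\beta_Y$ contributes to a given third derivative $\Gamma^{X\times Y}_{abc}$, there exist \emph{nonzero} pure contributions to the corresponding derivatives of $\Gamma_X$ and $\Gamma_Y$ with classes $\beta'_X\le\beta_X$ and $\beta'_Y\le\beta_Y$, hence pure summands of strictly smaller total order in the same matrix entry. This is the content of Propositions~\ref{prop:firstderiv} and~\ref{prop:secondderiv}, the three-derivative statement proved via Lemmas~\ref{lem:ijtogether} and~\ref{lem:codimzeroglue} (splitting axiom, divisor axiom, gluing of codimension-zero classes), and Corollary~\ref{cor:noleadingmixed}; your proposal implicitly assumes this corollary rather than proving it. A secondary issue: your appeal to effectivity to conclude that all $q$-exponents are nonnegative contradicts the paper's own remark that negative powers of the $q_i$ can occur in $\Gamma$ (the chosen $H^2$-basis need not be nef), so even that step needs care. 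To repair the proof you would need to supply the reconstruction/splitting argument (or an equivalent nonvanishing statement) guaranteeing a lower-order pure term in every entry that receives a mixed contribution.
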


\begin{proof}
From Proposition~\ref{prop:proddecomp}, we have
\[
(K_X \otimes 1) \ast (\alpha_1 \otimes \alpha_2) =
(K_X \ast_X \alpha_1) \otimes \alpha_2 + \text{(mixed terms)},
\]
and similarly
\[
(1 \otimes K_Y) \ast (\alpha_1 \otimes \alpha_2) =
\alpha_1 \otimes (K_Y \ast_Y \alpha_2) + \text{(mixed terms)}.
\]

The mixed terms involve Gromov--Witten invariants of curve classes $(\beta_X, \beta_Y)$ with $\beta_X \ne 0$ and $\beta_Y \ne 0$, so they contribute only at order $q_1^{\beta_X} q_2^{\beta_Y}$. By Corollary~\ref{cor:noleadingmixed}, these contributions are never minimal order terms. Therefore, the leading part of $K_{X \times Y}$ is given by
\[
K_X \ast_X \otimes \mathrm{id} + \mathrm{id} \otimes K_Y \ast_Y,
\]
with higher-order corrections arising from mixed terms, as claimed.
\end{proof}

\begin{lemma}
\label{lem:kronecker_prod_spectrum}
Let \( A \in \mathbb{C}^{m \times m} \) and \( B \in \mathbb{C}^{n \times n} \) have eigenvalues \( \{ \lambda_1, \dots, \lambda_m \} \) and \( \{ \mu_1, \dots, \mu_n \} \), respectively. Then the Kronecker sum
\[
A \otimes I_n + I_m \otimes B
\]
has eigenvalues \( \lambda_i + \mu_j \) for all \( i = 1, \dots, m \) and \( j = 1, \dots, n \).
\end{lemma}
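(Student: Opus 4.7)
The plan is to reduce to the case of upper triangular matrices via Schur's theorem. Over $\mathbb{C}$, both $A$ and $B$ admit Schur decompositions $A = U_A T_A U_A^{\ast}$ and $B = U_B T_B U_B^{\ast}$, with $U_A, U_B$ unitary and $T_A, T_B$ upper triangular whose diagonal entries list the eigenvalues $\lambda_1,\dots,\lambda_m$ and $\mu_1,\dots,\mu_n$ (with multiplicity). Using the mixed-product identity $(P\otimes Q)(R\otimes S) = (PR)\otimes(QS)$, I get
\[
A\otimes I_n + I_m\otimes B = (U_A\otimes U_B)\bigl(T_A\otimes I_n + I_m\otimes T_B\bigr)(U_A\otimes U_B)^{\ast},
\]
so it suffices to read off the spectrum of $T_A\otimes I_n + I_m\otimes T_B$, which is similar to the original matrix.

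The key observation is then that $T_A\otimes I_n + I_m\otimes T_B$ is itself upper triangular in the lexicographically ordered basis $e_i\otimes f_j$. Indeed, $T_A\otimes I_n$ has $(i,j)$-block equal to $(T_A)_{ij}I_n$, hence is block upper triangular with diagonal blocks $\lambda_i I_n$; and $I_m\otimes T_B$ is block diagonal with each diagonal block equal to $T_B$, itself upper triangular with diagonal $\mu_1,\dots,\mu_n$. Summing, the $i$-th diagonal block is $\lambda_i I_n + T_B$, which is upper triangular with diagonal $\lambda_i + \mu_1,\dots,\lambda_i+\mu_n$, while the off-diagonal blocks $(T_A)_{ij}I_n$ for $i<j$ contribute nothing to the diagonal.

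Therefore the diagonal of the full $mn\times mn$ matrix consists exactly of the values $\lambda_i + \mu_j$ for $1\le i\le m$, $1\le j\le n$. Since an upper triangular matrix has its eigenvalues on the diagonal (with algebraic multiplicity equal to the number of occurrences), the spectrum of the Kronecker sum, counted with multiplicity, is precisely $\{\lambda_i+\mu_j\}_{i,j}$.

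There is no real obstacle here: the argument is a textbook application of Schur's theorem combined with the mixed-product property of the Kronecker product. The only point worth being careful about is that the naive eigenvector argument $(A\otimes I + I\otimes B)(v\otimes w) = (\lambda+\mu)(v\otimes w)$ only produces $mn$ eigenvectors when $A$ and $B$ are diagonalizable; using Schur triangularization bypasses this and gives the statement with correct multiplicities in full generality.
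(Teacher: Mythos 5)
Your proof is correct, but it takes a different route from the paper. The paper's proof is the direct eigenvector computation you mention at the end: it takes eigenvectors $v_i$ of $A$ and $w_j$ of $B$ and checks that $(A\otimes I_n + I_m\otimes B)(v_i\otimes w_j) = (\lambda_i+\mu_j)(v_i\otimes w_j)$, then concludes. Your Schur-triangularization argument is more robust: conjugating by the unitary $U_A\otimes U_B$ (via the mixed-product identity) reduces to an upper triangular matrix whose diagonal is exactly $\{\lambda_i+\mu_j\}$, which yields the full spectrum \emph{with algebraic multiplicities} even when $A$ or $B$ fails to be diagonalizable — precisely the gap you flag in the naive argument, since the vectors $v_i\otimes w_j$ need not span $\mathbb{C}^{mn}$ in the defective case. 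The paper's version is shorter and suffices when one only needs that each $\lambda_i+\mu_j$ occurs as an eigenvalue (or when the operators are semisimple, as in the generically semisimple quantum-cohomology setting it is applied to), but your argument establishes the stronger multiplicity statement in full generality, which is what the lemma as stated actually asserts.
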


\begin{proof}
Let \( v_i \in \mathbb{C}^m \) be an eigenvector of \( A \) with eigenvalue \( \lambda_i \), and \( w_j \in \mathbb{C}^n \) an eigenvector of \( B \) with eigenvalue \( \mu_j \). Then
\[
A v_i = \lambda_i v_i, \quad B w_j = \mu_j w_j.
\]
The Kronecker product \( v_i \otimes w_j \in \mathbb{C}^{mn} \) satisfies
\begin{align*}
(A \oplus B)(v_i \otimes w_j) &= (A \otimes I_n + I_m \otimes B)(v_i \otimes w_j) \\
&= (A v_i) \otimes w_j + v_i \otimes (B w_j) \\
&= \lambda_i v_i \otimes w_j + v_i \otimes \mu_j w_j \\
&= (\lambda_i + \mu_j)(v_i \otimes w_j).
\end{align*}
Thus, \( v_i \otimes w_j \) is an eigenvector of \( A \oplus B \) with eigenvalue \( \lambda_i + \mu_j \), and the result follows.
\end{proof}

Let 
\[
\lambda_0 (q_1,t_1), \dots, \lambda_{p} (q_1,t_1)
\] 
denote the spectrum of $K_X$, and 
\[
\mu_0 (q_2,t_2), \dots, \mu_{r} (q_2,t_2)
\] 
the spectrum of $K_Y$. Let 
\[
U \subset \operatorname{Spec} (\mathbb{C}[q])
\] 
be a conical open subset in the analytic topology whose closure $\overline{U}$ satisfies
\[
\overline{U} \cap \{ q_1^i = 0 \} = \{ \vec{0} \}, \quad 1 \le i \le l, 
\quad \text{and} \quad
\overline{U} \cap \{ q_2^j = 0 \} = \{ \vec{0} \}, \quad 1 \le j \le m.
\]
We assume $t_1$ and $t_2$ are chosen so that $(q_1,t_1)$, $(q_2,t_2)$, and $(q,t)$ all lie within the domain of convergence of $F_X$, $F_Y$, and $F_{X \times Y}$, respectively. Our first main result is the following.

\begin{theorem}
\label{thm:main1_eiglim}
With the above notations, the set of eigenvalues of the operator $K_{X \times Y}$ converges to the set 
\[
    \{ \lambda_i (q_1, t_1) + \mu_j (q_2, t_2) \;|\; 0\leq i \leq p, \, 0 \leq j \leq r \}\]
as  $|t| \ll \infty$ and $q$ (and hence also $q_1$ and $q_2$) converges to $\vec{0}$ in $U$. 
\end{theorem}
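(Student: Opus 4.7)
The plan is to combine Proposition~\ref{prop:prod_euler_lead}, Lemma~\ref{lem:kronecker_prod_spectrum}, and the continuity of the spectrum of a matrix under perturbation. The starting point is the decomposition
\[
K_{X\times Y}(q,t) \;=\; \bigl(K_X(q_1,t_1)\otimes \mathrm{id}\bigr) \;+\; \bigl(\mathrm{id}\otimes K_Y(q_2,t_2)\bigr) \;+\; R(q,t),
\]
where, by Proposition~\ref{prop:prod_euler_lead} together with Corollary~\ref{cor:noleadingmixed}, the remainder $R$ collects only contributions from mixed curve classes $(\beta_X,\beta_Y)$ with $\beta_X \neq 0$ and $\beta_Y \neq 0$. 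In particular, every monomial of $R$ has strictly positive $q_1$-degree and strictly positive $q_2$-degree, so total $q$-degree at least two.

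Applying Lemma~\ref{lem:kronecker_prod_spectrum} to the Kronecker-sum part identifies its spectrum with
\[
\{\lambda_i(q_1,t_1) + \mu_j(q_2,t_2) : 0 \le i \le p,\; 0 \le j \le r\}.
\]
Therefore the theorem reduces to the claim that $\|R(q,t)\|$ tends to zero as $q \to \vec{0}$ in $U$ with $|t|\ll\infty$: the eigenvalues of a matrix depend continuously on its entries, so the spectrum of $K_0 + R$ approaches the spectrum of $K_0 := K_X\otimes\mathrm{id} + \mathrm{id}\otimes K_Y$ in the Hausdorff distance on finite subsets of $\mathbb{C}$.

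To establish the vanishing of $R$, I would use the conical hypothesis on $U$. The conditions $\overline{U}\cap\{q_1^i=0\} = \{\vec{0}\}$ and $\overline{U}\cap\{q_2^j=0\} = \{\vec{0}\}$, together with the conical structure, imply that each $|q_1^i|$ and $|q_2^j|$ is comparable to $\|q\|$ along $U$. This keeps $(q,t)$ inside the domain of convergence of $\Gamma_{X\times Y}$ (using the hypothesis on $t$ and the bounded-below convention on the negative exponents appearing in the Laurent expansion), so that $R$ is an absolutely convergent series there. Because each mixed monomial carries total $q$-degree at least two, a standard tail estimate then yields $\|R(q,t)\| \le C\|q\|^2$ on a sufficiently small punctured cone in $U$.

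The main obstacle is exactly this uniform control of the remainder: one needs simultaneously to ensure that the mixed contributions are summable along $U$ (so that no potentially negative-exponent monomial in $\Gamma$ blows up as $q\to\vec{0}$) and that the higher-order vanishing guaranteed by Corollary~\ref{cor:noleadingmixed} translates into an honest $O(\|q\|^2)$ bound on $R$. The conical choice of $U$ is engineered precisely to secure both. Once $\|R\|\to 0$ is established, the theorem follows by applying continuity of matrix spectra to the decomposition above and invoking Lemma~\ref{lem:kronecker_prod_spectrum}.
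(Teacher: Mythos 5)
Your proposal shares the paper's skeleton -- the decomposition of $K_{X\times Y}$ coming from Proposition~\ref{prop:prod_euler_lead} and the identification of the spectrum of the Kronecker sum via Lemma~\ref{lem:kronecker_prod_spectrum} -- but it diverges at the last step: the paper concludes with Hensel's lemma applied to the characteristic polynomial (the leading-order factorization into the factors corresponding to $\lambda_i+\mu_j$ persists for small $q$ in $U$), whereas you conclude with an absolute estimate $\lVert R\rVert\to 0$ plus continuity of eigenvalues. This last step has a genuine gap. Your bound rests on the assertion that every mixed monomial has strictly positive $q_1$- and $q_2$-degree, hence total $q$-degree at least two; but this is \emph{not} what Corollary~\ref{cor:noleadingmixed} provides. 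That corollary is a purely relative statement: mixed terms never attain the \emph{minimal} order among the summands of a given entry. The paper explicitly allows negative exponents of the Novikov variables in $\Gamma$ (see the remark following the definition of the potential), so the minimal order of an entry may itself be negative, and a mixed term of higher-than-minimal but still negative (or zero) total order is perfectly consistent with everything proved in Section~\ref{sec:gwinvprod}. In that situation $R$ need not tend to zero at all, and no $O(\lVert q\rVert^2)$ bound is available. The conical choice of $U$ makes the coordinates mutually comparable and thereby turns statements about total order into asymptotics along $U$, but it cannot upgrade ``not of minimal order'' into ``positive order,'' which is what your tail estimate requires.

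Even granting $\lVert R\rVert\to 0$, the continuity argument needs the unperturbed matrix $K_0=K_X\otimes\mathrm{id}+\mathrm{id}\otimes K_Y$ to remain bounded along the limit: for non-normal matrices the optimal matching distance between the spectra of $K_0$ and $K_0+R$ is controlled only by bounds of the shape $C(n)\,(\lVert K_0\rVert+\lVert R\rVert)^{1-1/n}\lVert R\rVert^{1/n}$, which need not vanish if entries of $K_0$ blow up as $q\to\vec 0$ (again possible once negative exponents are allowed, and the theorem is precisely an asymptotic statement about a moving target set). The paper's route via Hensel's lemma sidesteps both issues because it compares $q$-adic leading terms of the characteristic polynomial rather than norms, i.e.\ it is a relative, graded comparison. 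Your argument does become correct, and is a clean analytic alternative, under the additional hypotheses that the $H^2$-basis is chosen so that all Novikov exponents are nonnegative and that $K_X$, $K_Y$ stay bounded on the relevant region; to match the stated generality you would either need to justify those hypotheses or recast the final step at the level of leading orders, as the paper does.
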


\begin{proof}
By Proposition~\ref{prop:prod_euler_lead}, the leading terms of the characteristic polynomial of $K_{X \times Y}$ come from $K_X$ and $K_Y$ separately. Lemma~\ref{lem:kronecker_prod_spectrum} then shows that at $q=0$ the characteristic polynomial splits into linear factors corresponding to all sums $\lambda_i + \mu_j$. 

Finally, Hensel's lemma \cite[Lecture 12]{abhyankar1990algebraic} ensures that this splitting persists for generic small $q$, giving the stated convergence of eigenvalues. This argument follows the same strategy as in \cite[Theorem~5.3]{gyenge2025blow}.
\end{proof}

\subsection{Base change in quantum cohomology}

In certain calculations one base of the cohomology ring is preferred over another. A base change in cohomology also affects the particular expressions for the quantum multiplication and the Dubrovin connection.

After computing the quantum multiplication matrices with respect to the original basis, we conjugate these matrices by the base change matrix. Importantly, the Novikov variables \( q_i \) also depend on the choice of a basis for the curve classes in homology, so a base change in cohomology must be accompanied by a corresponding change of variables to reflect the new dual basis. This procedure allows us to track how quantum corrections transform under coordinate change and can be used to verify consistency between different presentations or computations.

Let us illustrate this in the case of \( X = \mathbb{P}^1 \times \mathbb{P}^1 \). Denote by \( H_1 \) and \( H_2 \) the pullbacks of the hyperplane classes from the two factors, so that \( H^\ast(X) = \mathbb{C}[H_1,H_2]/(H_1^2, H_2^2) \). Let us consider the standard basis \( \{1, H_1, H_2, pt\} \) of \( H^\ast(X) \), and let \( q_1 = e^{t_1} \), \( q_2 = e^{t_2} \) be the Novikov variables corresponding to the classes dual to \( H_1 \) and \( H_2 \), respectively.

The leading terms of the operator $K$ in this basis were computed in \cite[Example~4.16]{gyenge2025blow}:
\[
\begin{pmatrix}
0 & 2q_1 & 2q_2 & 2q_1q_2t_p \\
2& -q_1q_2\frac{t_p^3}{3} & q_2t_p & 2q_2 \\
2& q_1t_p & -q_1q_2\frac{t_p^3}{3} & 2q_1 \\
 -2 t_p & 2 & 2  & 0
\end{pmatrix}
\]

Now suppose we wish to change to the basis \( \{1, H, D, pt\} \), where \( H = H_1 + H_2 \) and \( D = H_1 - H_2 \). This change of basis is represented by the matrix
\[
T = 
\begin{pmatrix}
1 & 0 & 0 & 0 \\
0 & 1 & 1 & 0 \\
0 & 1 & -1 & 0 \\
0 & 0 & 0 & 1
\end{pmatrix}
\quad\quad
\textrm{with inverse}
\quad\quad
T^{-1} =
\begin{pmatrix}
1 & 0 & 0 & 0 \\
0 & \frac{1}{2} & \frac{1}{2} & 0 \\
0 & \frac{1}{2} & -\frac{1}{2} & 0 \\
0 & 0 & 0 & 1
\end{pmatrix}.
\]
Conjugating by \( T \), we obtain 
\[
T^{-1} \cdot [K] \cdot T =
\begin{pmatrix} 0 & 2 q_{1} + 2 q_{2} & 2 q_{1} - 2 q_{2} & 2 q_{1} q_{2} t_{p} \\ 
2 & - \frac{q_{1} q_{2} t_{p}^{3}}{3}+\frac{q_{1} t_{p}+q_2t_p}{2} & \frac{q_{1} t_{p}-q_2t_p}{2} & q_{1} + q_{2} \\ 
0 & \frac{-q_{1} t_{p}+q_2t_p}{2} & - \frac{q_{1} q_{2} t_{p}^{3}}{3}-\frac{q_{1} t_{p}+q_2t_p}{2} & - q_{1} + q_{2} \\ 
- 2 t_{p} & 4 & 0 & 0 \end{pmatrix}.
\]
Lastly, we have to substitute the Novikov variables as $q_h = q_1q_2$ and $q_d=q_1q_2^{-1}$. Equivalently,
\[q_1= \sqrt{q_1q_2 \cdot q_1q_2^{-1}}=\sqrt{q_hq_d}\] and
\[q_2= \sqrt{q_1q_2 \cdot (q_1q_2^{-1})^{-1}}=\sqrt{q_hq_d^{-1}}.\]
This implies that the leading term of the matrix of $K$ in the new basis \( \{1,H,D,pt\} \) is
\[
\begin{pmatrix} 0 & 2 \sqrt{q_hq_d} + 2 \sqrt{q_hq_d^{-1}} & 2 \sqrt{q_hq_d} - 2 \sqrt{q_hq_d^{-1}} & 2 q_h t_{p} \\ 
2 & - \frac{q_h t_{p}^{3}}{3}+\frac{\sqrt{q_hq_d} t_{p}+\sqrt{q_hq_d^{-1}}t_p}{2} & \frac{\sqrt{q_hq_d} t_{p}-\sqrt{q_hq_d^{-1}}t_p}{2} & \sqrt{q_hq_d} + \sqrt{q_hq_d^{-1}} \\ 
0 & \frac{-\sqrt{q_hq_d} t_{p}+\sqrt{q_hq_d^{-1}}t_p}{2} & - \frac{q_h t_{p}^{3}}{3}-\frac{\sqrt{q_hq_d} t_{p}+\sqrt{q_hq_d^{-1}}t_p}{2} & - \sqrt{q_hq_d} + \sqrt{q_hq_d^{-1}} \\ 
- 2 t_{p} & 4 & 0 & 0 \end{pmatrix}.
\]

\subsection{Formal decomposition}

One can view the quantum product as a formal family of supercommutative product structures $\ast_{\tau}$ on $H^\ast(X)\otimes
\CC[[Q]]$ parametrized by $\tau \in H^{\ast}(X)$. The coefficient ring $\CC[[Q]]$ is the
completed monoid ring of the set $NE_{\NN}(X) \subset H_2(X, \ZZ)$ of effective curves, which is called the \emph{Novikov ring}. 

The quantum D-module $QDM(X)$ is the module $H^{\ast}
(X) \otimes \CC[u][[Q, \tau ]]$
equipped with the mutually supercommuting operators from $QDM(X)$ to
$u^{-2} QDM(X)$ given by the Dubrovin connection \eqref{eq:dubconn1}, \eqref{eq:dubconn2} and \eqref{eq:dubconn3}. 

In fact, the Hukuhara-Levelt-Turrittin Theorem implies that there exists a formal decomposition
\begin{equation}
\label{eq:hltdecomp}
QDM(X) = \bigoplus_{a \in \mathbb{C}} \left( d+\frac{a}{u^2} du \right) \otimes \nabla_{\mathrm{reg}}^a
\end{equation}
where the summation is finite and $\nabla_{\mathrm{reg}}^a$ has a regular singularity at $u=0$ \cite[Page~3]{kontsevich2025moduli}. The set of numbers $a$ appearing non-trivially in this decomposition is called the \emph{quantum spectrum} of the connection.

Let $X$ be a smooth projective variety, and $Z \subset X$ a smooth subvariety of codimension $r \geq 2$. Let $\widetilde{X}$ be the blow-up of $X$ along $Z$, and let $E \subset \widetilde{X}$ be the exceptional divisor. To compare the quantum D-modules of these varieties, one needs to embed their Novikov ring into a common ring. Denote by $Q$, $\widetilde{Q}$, $Q_Z$ and $Q_E$ the respective Novikov variables of $X$, $\widetilde{X}$, $Z$ and $E$. In particular, $Q_E$ is a single variable. The following statement about the behaviour of the quantum D-module with respect to blow-ups, conjectured originally by M. Kontsevich, was proved recently by the author and Sz. Szabó  in the surface case, and in general by H. Iritani.

\begin{theorem}[{\cite{gyenge2025blow}, \cite{iritani2023quantum}}] 
\label{thm:quantumbu}
There exists a formal invertible change of variables
\[ H^{\ast}(\widetilde{X})\oplus H^{\ast}(Z) \to H^{\ast}(X) \oplus H^{\ast}(E),\quad (\tilde{\tau},\tau_Z) \mapsto (\tau (\tilde{\tau},\tau_Z),\tau_E(\tilde{\tau},\tau_Z))
\] defined over $\C((Q_E^{-\frac{1}{r-1}}))[[Q]]$ and an isomorphism
\[\mathrm{QDM}(\widetilde{X})^{\mathrm{la}} \oplus \mathrm{QDM}(Z)^{\mathrm{la}} \to \tau^{\ast}\mathrm{QDM}(X)^{\mathrm{la}} \oplus \tau_E^{\ast}\mathrm{QDM}(E)^{\mathrm{la}}\]
that commutes with the quantum connection.

\end{theorem}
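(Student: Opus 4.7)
The plan is to lift the classical additive identity
\begin{equation*}
H^{\ast}(\widetilde{X}) \oplus H^{\ast}(Z) \;\cong\; H^{\ast}(X) \oplus H^{\ast}(E)
\end{equation*}
(which follows from the blow-up formula $H^{\ast}(\widetilde{X}) \cong H^{\ast}(X) \oplus H^{\ast}(Z)^{\oplus(r-1)}$ and the projective bundle formula $H^{\ast}(E) \cong H^{\ast}(Z)^{\oplus r}$) to the quantum level by constructing the formal coordinate changes $\tau, \tau_E$ order by order in the Novikov parameters. The crucial observation is that the Mori cone of $\widetilde{X}$ adds exactly one new generator compared to $X$, namely the class $\ell$ of a line in the exceptional fibre, with associated Novikov variable $Q_E$; and the computation $K_{\widetilde{X}} \cdot \ell = -(r-1)$ forces $Q_E$ to enter the Dubrovin connection through an irregular singularity of order $r-1$. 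This is precisely what produces the natural base ring $\C((Q_E^{-1/(r-1)}))[[Q]]$ in the statement.

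First, I would match the leading ($q=0$) parts of the connection matrices on both sides. The $K_X$-block agrees directly with the pullback part of $K_{\widetilde{X}}$, while the remaining block is governed by a quantum Leray--Hirsch relation for the projective bundle $E \to Z$, which stacks $r$ copies of $\mathrm{QDM}(Z)$ with a nilpotent connection matrix in the fibre direction. After cancelling the extra $\mathrm{QDM}(Z)$ summand on the left-hand side, the leading blocks on the two sides coincide under the intended change of variables.

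The \textbf{hard part} will be controlling the higher-order quantum corrections coming from curves in $\widetilde{X}$ whose images meet the exceptional divisor non-trivially; a decoupling statement in the spirit of Proposition \ref{prop:prod_euler_lead} is not automatic here because curve classes on $\widetilde{X}$ freely mix the pullback and exceptional components. In the surface case \cite{gyenge2025blow}, this is handled by a Newton-polygon analysis of the characteristic polynomial of $K_{\widetilde{X}}$, whose spectrum is shown to split into pieces analogous to those produced in Theorem \ref{thm:main1_eiglim}. In the general case \cite{iritani2023quantum}, one instead degenerates $\widetilde{X}$ to the projective completion of the normal bundle $N_{Z/X}$, transports Gromov--Witten invariants via the degeneration formula, and applies quantum Serre duality to reduce to a bundle-theoretic calculation.

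Once the connection matrices are matched up to formal conjugation by a change-of-frame over $\C((Q_E^{-1/(r-1)}))[[Q]]$, the Hukuhara--Levelt--Turrittin decomposition \eqref{eq:hltdecomp} splits both sides into their irregular factors $d+\frac{a}{u^2}du$ and regular-singular components $\nabla_{\mathrm{reg}}^a$. Equality of the irregular parts follows from comparing Euler-field spectra via a blow-up analogue of Theorem \ref{thm:main1_eiglim}, and the explicit change of variables then identifies the regular pieces $\nabla_{\mathrm{reg}}^a$ summand by summand, yielding the claimed formal isomorphism.
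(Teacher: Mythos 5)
The first thing to say is that the paper does not prove Theorem~\ref{thm:quantumbu} at all: it is imported wholesale from \cite{gyenge2025blow} (surfaces) and \cite{iritani2023quantum} (general case), and the only argument the paper supplies is the remark that $r$ copies of $\mathrm{QDM}(Z)^{\mathrm{la}}$ assemble into $\mathrm{QDM}(E)^{\mathrm{la}}$ via the quantum Leray--Hirsch theorem of \cite{lee2013invariance}, which converts Iritani's formulation into the one stated. Your outline does contain that conversion step, and your description of the external proofs (Newton-polygon analysis of the characteristic polynomial of $K_{\widetilde{X}}$ in the surface case; degeneration to the projective completion of $N_{Z/X}$ plus quantum Serre duality in general) is an accurate summary of where the actual content lives. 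So at the level of ``what is being relied on,'' you and the paper are doing the same thing: citing the hard results.

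As a standalone proof, however, your proposal has a genuine gap, and you flag it yourself: the construction of the invertible change of variables $(\tilde{\tau},\tau_Z)\mapsto(\tau,\tau_E)$ and the control of the contributions of curve classes meeting $E$ are precisely the content of the cited theorems, and your text only describes the techniques by which the references handle them rather than carrying out any of the estimates. There is also a structural inaccuracy in your ``first step'': matching the leading parts ``at $q=0$'' is not how this decomposition arises. The base ring is $\C((Q_E^{-\frac{1}{r-1}}))[[Q]]$, i.e.\ Laurent series in $Q_E^{-1/(r-1)}$, so the isomorphism lives in a formal neighborhood of $Q_E=\infty$, where the exceptional spectrum points run off to infinity like multiples of $Q_E^{1/(r-1)}$; unlike the product situation of Proposition~\ref{prop:prod_euler_lead} and Theorem~\ref{thm:main1_eiglim}, there is no regime in which the exceptional corrections are higher order in all Novikov variables, which is exactly why the blow-up statement needs the ramified large-$Q_E$ base change and cannot be obtained by the leading-order/Hensel argument used elsewhere in the paper. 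Also, minor point: the Mori cone of $\widetilde{X}$ is not simply the cone of $X$ with one extra generator $\ell$ (strict transforms shift classes by negative multiples of $\ell$); only $N_1(\widetilde{X})\cong N_1(X)\oplus\Z\ell$ holds, which suffices for the Novikov bookkeeping but should not be stated as a cone decomposition.
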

Here the superscript ``$\mathrm{la}$'' refers to a base change of the respective quantum D-module to $\C((Q_E^{-\frac{1}{r-1}}))[[Q]]$ via certain ring extensions. To get the above form from the one stated in \cite{iritani2023quantum} note that $r$ copies of $\mathrm{QDM}(Z)^{\mathrm{la}}$ gives exactly $\mathrm{QDM}(E)^{\mathrm{la}}$. This is because $E$ is a projective bundle over $Z$, so one can apply the quantum Leray--Hirsch theorem \cite[Theorem~0.2]{lee2013invariance}.

In the same vein, for the  product of two varieties $X$ and $Y$ a formal change of variables allows us to get rid of the higher order terms. In this case the Novikov rings get extended to $\C[[Q_1,Q_2]]$ where $Q_1$ is the Novikov (vector)variable of $X$ and $Q_2$ is the Novikov (vector)variable of $Y$.
\begin{theorem} 
\label{thm:formtensoriso}
There exists a a formal invertible change of variables
\[ H^{\ast}(X \times Y) \to H^{\ast}(X) \otimes H^{\ast}(Y),\quad \tau \mapsto \sum_l \tau_X^l \otimes \tau_Y^l
\] defined over $\C[[Q_1,Q_2]]$
and an isomorphism
\[\mathrm{QDM}(X \times Y)^{\mathrm{la}} \to \mathrm{QDM}(X)^{\mathrm{la}} \otimes \mathrm{QDM}(Y)^{\mathrm{la}} \]
that commutes with the quantum connection.
Here $\mathrm{QDM}(X)^{\mathrm{la}} \otimes \mathrm{QDM}(Y)^{\mathrm{la}}$ in the $\partial_u$-direction is
\[ \nabla_{\partial_u} = \partial_u - \frac{1}{u^2}\left(K_X \otimes id + id \otimes K_Y \right) + \frac{1}{u} G_{X \times Y}.\]
\end{theorem}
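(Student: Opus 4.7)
The strategy follows Iritani's template from Theorem~\ref{thm:quantumbu} in the blow-up case, although the product setting is substantially simpler since no fractional Novikov extensions are required. The starting point is Proposition~\ref{prop:prod_euler_lead}: the $u^{-2}$-coefficient of the quantum connection on $\mathrm{QDM}(X \times Y)^{\mathrm{la}}$ agrees with the Kronecker sum $K_X \otimes \mathrm{id} + \mathrm{id} \otimes K_Y$ up to higher-order corrections in the Novikov variables, and by Corollary~\ref{cor:noleadingmixed} these mixed-class corrections never contribute to leading terms. Thus one expects that the discrepancy between $\mathrm{QDM}(X \times Y)^{\mathrm{la}}$ and $\mathrm{QDM}(X)^{\mathrm{la}} \otimes \mathrm{QDM}(Y)^{\mathrm{la}}$ is only a formal gauge, which is exactly what the theorem asserts.

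First, I would apply the Hukuhara--Levelt--Turrittin decomposition \eqref{eq:hltdecomp} to both sides. For $\mathrm{QDM}(X)^{\mathrm{la}} \otimes \mathrm{QDM}(Y)^{\mathrm{la}}$, Lemma~\ref{lem:kronecker_prod_spectrum} shows that the spectrum of the $u^{-2}$-term is the set of sums $\{\lambda_i(q_1,t_1) + \mu_j(q_2,t_2)\}$, so the corresponding HLT decomposition is indexed by these sums. On the other side, Theorem~\ref{thm:main1_eiglim} identifies the limiting spectrum of $K_{X \times Y}$ in the conical domain $U$ with the same set. This matches the irregular exponents bijectively. For the regular singular factors, the total rank matches by Künneth, the multiplicities at each eigenvalue match by the Kronecker block structure coming from Proposition~\ref{prop:prod_euler_lead}, and Deligne's classification of regular singular meromorphic connections identifies the regular pieces once the exponents are pinned down.

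The main obstacle is to upgrade this spectral identification into the explicit formal change of variables $\tau \mapsto \sum_l \tau_X^l \otimes \tau_Y^l$ intertwining the full Dubrovin connection in all directions \eqref{eq:dubconn1}--\eqref{eq:dubconn3}. I would construct it recursively in the total Novikov bidegree $(d_1, d_2)$: at $(0,0)$ it is the classical Künneth isomorphism; at each higher bidegree, the obstruction to connection compatibility is a mixed-class term, and the WDVV equations on both $X \times Y$ and on the tensor product force this obstruction to be a coboundary for the differential $[-, K_X \otimes \mathrm{id} + \mathrm{id} \otimes K_Y]$, yielding the next correction. Strict triangularity in the Novikov filtration then guarantees formal convergence in $\mathbb{C}[[Q_1, Q_2]]$. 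The delicate point is verifying that the WDVV obstructions are indeed coboundaries at every order; via Propositions~\ref{prop:firstderiv} and~\ref{prop:secondderiv} together with Corollary~\ref{cor:noleadingmixed}, this reduces to the fact that every mixed-class derivative of $\Gamma^{X \times Y}$ is controlled by pure $X$- and $Y$-invariants of strictly lower Novikov degree, which is precisely what makes the recursion close.
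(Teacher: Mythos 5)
Your starting point is the same as the paper's: the paper's entire proof of Theorem~\ref{thm:formtensoriso} is a one-line appeal to Proposition~\ref{prop:prod_euler_lead}, and your use of that proposition, of Corollary~\ref{cor:noleadingmixed}, and the observation that no fractional Novikov extension is needed all match that reasoning. The two steps you add to actually close the argument, however, each contain a genuine gap.

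First, the spectral-matching step proves less than you claim. Matching the exponential factors in \eqref{eq:hltdecomp} via Lemma~\ref{lem:kronecker_prod_spectrum} and Theorem~\ref{thm:main1_eiglim}, together with the ranks of the summands, does \emph{not} determine the regular singular factors $\nabla^a_{\mathrm{reg}}$: formal regular singular connections of equal rank are classified by their residue/monodromy data (Jordan structure, exponents modulo $\ZZ$), not by rank alone, so the appeal to ``Deligne's classification once the exponents are pinned down'' does not identify the regular pieces. An isomorphism of quantum $D$-modules commuting with the connection is strictly stronger than equality of quantum spectra with multiplicities, so this part cannot substitute for constructing the gauge transformation — which you acknowledge, but it means the first half of your argument carries no weight toward the conclusion.

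Second, the recursion you propose for constructing the change of variables is not solvable as stated. At Novikov bidegree $(0,0)$ the operator $K_X\otimes\mathrm{id}+\mathrm{id}\otimes K_Y$ reduces to classical cup product by $c_1(X)\otimes 1+1\otimes c_1(Y)$, which is nilpotent; hence $\bigl[\,-\,,K_X\otimes\mathrm{id}+\mathrm{id}\otimes K_Y\bigr]$ has a large kernel and cokernel, and one cannot invert it order by order in the Novikov filtration. You assert that WDVV forces each obstruction to be a coboundary, but Propositions~\ref{prop:firstderiv} and~\ref{prop:secondderiv} and Corollary~\ref{cor:noleadingmixed} only control \emph{leading} terms and the existence of lower-degree pure invariants; they do not show that the mixed-term obstruction lies in the image of this differential, which is exactly the point at which the recursion would stall. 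The content of the theorem is essentially the K\"unneth theorem for genus-zero Gromov--Witten theory at the level of Frobenius manifolds: the workable route is to use Behrend's product formula (Theorem~\ref{thm:behrend_prod}) to identify the full potential $\Gamma_{X\times Y}$ with the tensor-product potential on $H^{\ast}(X)\otimes H^{\ast}(Y)$ under an explicit change of coordinates, and then transport the Dubrovin connection and handle the formal decomposition in $u$ as in Theorem~\ref{thm:quantumbu}, rather than to remove the mixed terms by an ad-hoc homological recursion against a nilpotent leading term.
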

\begin{proof}
Follows from Proposition~\ref{prop:prod_euler_lead}.
\end{proof}

\begin{corollary}
The quantum spectrum of $X \times Y$ consists of the points $a + b$ where $a$ is a spectrum point of $X$ and $b$ is a spectrum point of $Y$.
\end{corollary}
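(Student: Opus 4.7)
The plan is to deduce the corollary directly from Theorem~\ref{thm:formtensoriso} by transporting the Hukuhara--Levelt--Turrittin decomposition across the isomorphism and checking that the irregular pieces combine additively in the $u^{-2}du$ slot, while regular singularities are preserved by tensor product.

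First, I would apply Theorem~\ref{thm:formtensoriso} to replace $\mathrm{QDM}(X \times Y)^{\mathrm{la}}$ by $\mathrm{QDM}(X)^{\mathrm{la}} \otimes \mathrm{QDM}(Y)^{\mathrm{la}}$; the isomorphism is compatible with the connection, and in particular with $\nabla_{\partial_u}$, so the quantum spectrum of $X \times Y$ equals the spectrum of the $\partial_u$-connection on the right-hand side. Then I would invoke the HLT decomposition \eqref{eq:hltdecomp} separately for each factor: after a suitable formal base change,
\[
\mathrm{QDM}(X)^{\mathrm{la}} \;\cong\; \bigoplus_{a} \Bigl( d + \tfrac{a}{u^2}\,du\Bigr) \otimes \nabla_{\mathrm{reg},X}^{a},
\qquad
\mathrm{QDM}(Y)^{\mathrm{la}} \;\cong\; \bigoplus_{b} \Bigl( d + \tfrac{b}{u^2}\,du\Bigr) \otimes \nabla_{\mathrm{reg},Y}^{b},
\]
where the summations are finite and the subscripted factors are regular at $u=0$.

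Next, I would tensor the two decompositions. Distributing the tensor product over the finite direct sums gives
\[
\mathrm{QDM}(X)^{\mathrm{la}} \otimes \mathrm{QDM}(Y)^{\mathrm{la}} \;\cong\; \bigoplus_{a,b} \Bigl[ \Bigl( d + \tfrac{a}{u^2}\,du\Bigr) \otimes \Bigl( d + \tfrac{b}{u^2}\,du\Bigr) \Bigr] \otimes \bigl(\nabla_{\mathrm{reg},X}^{a} \otimes \nabla_{\mathrm{reg},Y}^{b}\bigr).
\]
The tensor product connection formula (the Kronecker-sum lemma in Section~2.4) applied to the two rank-one irregular pieces gives $d + \frac{a+b}{u^2}\,du$, and the tensor of two connections with regular singularity at $u=0$ is again regular. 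Regrouping the summands by the value $c = a+b$ then yields a decomposition of exactly the shape \eqref{eq:hltdecomp} for $\mathrm{QDM}(X \times Y)^{\mathrm{la}}$, with the irregular exponents running over $\{a+b\}$ as $a$ ranges over the spectrum of $X$ and $b$ over the spectrum of $Y$.

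Finally, I would appeal to the uniqueness part of the HLT theorem: the set of exponents $a$ appearing with nonzero regular factor in the decomposition is intrinsic to the formal connection. Hence the quantum spectrum of $X \times Y$ is precisely $\{a + b\}$, proving the corollary. The only mildly subtle point is the regrouping step, where one has to make sure that combining summands indexed by different pairs $(a,b)$ with the same sum $a+b$ keeps the associated regular factor genuinely regular; this is automatic because a finite direct sum of connections regular at $u=0$ is regular at $u=0$. Everything else is formal manipulation once Theorem~\ref{thm:formtensoriso} is in hand.
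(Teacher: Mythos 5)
Your argument is correct, and it takes a somewhat different route from the paper's. The paper disposes of the corollary in one line by citing the Kronecker-sum spectrum lemma (Lemma~\ref{lem:kronecker_prod_spectrum}): via Theorem~\ref{thm:formtensoriso} the $\partial_u$-part of the connection has matrix $\frac{1}{u^2}\bigl(K_X \otimes \mathrm{id} + \mathrm{id} \otimes K_Y\bigr)$ up to the $\frac{1}{u}G$ term, so its eigenvalues are the pairwise sums $\lambda_i + \mu_j$, and these are identified with the spectrum. You instead work at the level at which the quantum spectrum is actually defined, namely the Hukuhara--Levelt--Turrittin decomposition \eqref{eq:hltdecomp}: you decompose each factor, tensor the decompositions, observe that the rank-one exponential pieces combine as $\bigl(d+\tfrac{a}{u^2}du\bigr)\otimes\bigl(d+\tfrac{b}{u^2}du\bigr) = d+\tfrac{a+b}{u^2}du$ while regular~$\otimes$~regular stays regular, regroup by the value $a+b$ (noting no cancellation since a direct sum of nonzero regular factors is nonzero), and invoke uniqueness of the HLT exponents. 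This is more faithful to the definition of the spectrum via \eqref{eq:hltdecomp} and makes explicit the step that the paper leaves implicit (that the HLT exponents of the tensor product are governed by the Kronecker-sum structure of the leading term); what the paper's shorter argument buys is directness, at the cost of implicitly identifying the HLT exponents with eigenvalues of the operator in the $u^{-2}$ slot. Both arguments rest on Theorem~\ref{thm:formtensoriso}, and the additive phenomenon you exploit for the exponential factors is exactly the rank-one instance of the Kronecker-sum lemma the paper cites.
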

\begin{proof}
Follows from Proposition~\ref{lem:kronecker_prod_spectrum}.
\end{proof}

\section{Birational and motivic invariants}
\label{sec:applications}

\subsection{Product of atoms}

Consider a vector $\overline{c}=(c_1,\dots,c_r)$ of length $r$ whose nonzero entries $c_i \in \mathbb{C}^{\times}$ are not all equal to one. 
Following \cite{kontsevich2025moduli}, we denote by 
\[
NSQC_{\overline{c}}
\]
the groupoid of \emph{framed meromorphic connections} of rank $r$ with prescribed singularity data determined by $\overline{c}$. These objects arise naturally in the study of quantum connections and their Stokes structures, and may be regarded as the building blocks of the categories underlying non-semisimple quantum cohomology. Passing to isomorphism classes, one obtains the coarse moduli space associated to $NSQC_{\overline{c}}$.

Recently, Katzarkov--Kontsevich--Pantev--Yu introduced in \cite{katzarkov2025birational} the notion of an \emph{atom}. Roughly speaking, given a smooth projective variety $X$, the set 
\[
\mathrm{Atom}(X)
\]
consists of the direct summands of the quantum connection of $X$ appearing in the decomposition \eqref{eq:hltdecomp}, counted with multiplicity and considered modulo the following equivalence relations:

\begin{enumerate}
\item (Disjoint union) For any pair of smooth projective varieties $X_1, X_2$,
\[
\mathrm{Atom}(X_1 \sqcup X_2) \;\sim\; \mathrm{Atom}(X_1) + \mathrm{Atom}(X_2).
\]
\item (Blow-up relation) For a blow-up $\widetilde{X} \to X$ along a smooth center $Z \subset X$ of codimension $r \geq 2$,
\[
\mathrm{Atom}(\widetilde{X}) \;\sim\; \tau^{\ast} \mathrm{Atom}(X) \;+\; \left( \sum_{i=1}^{r-1} \zeta_i^{\ast}\, \mathrm{Atom}(Z) \right),
\]
for suitable formal changes of quantum parameters $\tau, \zeta_i$.
\item (Quantum Leray--Hirsch) By \cite[Theorem~0.2]{lee2013invariance}, for a vector bundle $V \to B$ with projective base $B$, one has a prescribed relation between the atom of the fiberwise projectivization $\mathrm{Proj}_B(V)$ and the atom of the base $B$.
\end{enumerate}

Using these relations together with the Weak Factorisation Theorem, it follows that there exists a well-defined birationally invariant map
\begin{align*}
\mathrm{Atom}: \mathrm{ProjVar}_{\mathbb{C}} & \longrightarrow  \mathbb{N}^{\bigoplus_{\overline{c}} NSQC_{\overline{c}}}/\sim, \\
X & \longmapsto \mathrm{Atom}(X),
\end{align*}
where $\mathrm{ProjVar}_{\mathbb{C}}$ denotes the set of projective varieties over $\mathbb{C}$, and the target is the set of finite formal linear combinations of objects of the groupoids $NSQC_{\overline{c}}$ for all $\overline{c}$, modulo the equivalence above.

\begin{lemma}
The tensor product of connections induces a multiplication on $\mathbb{N}^{\bigoplus_{\overline{c}} NSQC_{\overline{c}}}/\sim$.
\end{lemma}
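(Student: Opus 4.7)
The plan is to define the multiplication on representatives in the free monoid $\mathbb{N}^{\bigoplus_{\overline{c}} NSQC_{\overline{c}}}$ and then verify that it descends to the quotient. On a pair of basis elements, i.e.\ isomorphism classes $[(E_1,\nabla_1)]\in NSQC_{\overline{c}_1}$ and $[(E_2,\nabla_2)]\in NSQC_{\overline{c}_2}$, I would set the product to be the class of the tensor-product connection $(E_1\otimes E_2,\,\nabla_1\otimes\mathrm{id}+\mathrm{id}\otimes\nabla_2)$ constructed in Section~\ref{sec:prelim}. By Lemma~\ref{lem:kronecker_prod_spectrum}, the irregular part of this connection has formal type whose leading eigenvalues are the pairwise sums $c_i^{(1)}+c_j^{(2)}$, so the result indeed lies in $NSQC_{\overline{c}}$ for the vector $\overline{c}$ assembled from these sums. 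Extending bilinearly yields a pairing on the free monoid, and it remains to check invariance under isomorphism of each factor (immediate), distributivity over the disjoint-union relation~(1) (automatic from bilinearity of $\otimes$), and compatibility with the blow-up and quantum Leray--Hirsch relations (2) and (3).

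To verify~(2) I would tensor the blow-up relation
\[
\mathrm{Atom}(\widetilde{X}) \;\sim\; \tau^{\ast}\mathrm{Atom}(X) \;+\; \sum_{i=1}^{r-1}\zeta_i^{\ast}\mathrm{Atom}(Z)
\]
on the right with $\mathrm{Atom}(W)$ for an auxiliary smooth projective $W$. By Theorem~\ref{thm:formtensoriso}, each term converts into the atom of a product variety, producing an equation relating $\mathrm{Atom}(\widetilde{X}\times W)$ to $\mathrm{Atom}(X\times W)+(r-1)\mathrm{Atom}(Z\times W)$. Since $\widetilde{X}\times W$ is the blow-up of $X\times W$ along the smooth subvariety $Z\times W$ of the same codimension $r$, this is precisely the blow-down formula for the pair $(X\times W,\,Z\times W)$, so the image equation lies in the subgroup generated by~(2). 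An analogous argument using the compatibility of projective bundles with products handles~(3): if $V\to B$ is a vector bundle, then $\mathrm{Proj}_B(V)\times W$ is the projectivization over $B\times W$ of the pullback bundle, and the quantum Leray--Hirsch relation tensors coherently.

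The main obstacle I expect is bookkeeping the formal changes of variables. Theorem~\ref{thm:quantumbu} produces an equivalence over $\C((Q_E^{-1/(r-1)}))[[Q]]$, while Theorem~\ref{thm:formtensoriso} lives over $\C[[Q_1,Q_2]]$, and the argument above requires iterating these substitutions. I would pass to a common enlargement of these coefficient rings and verify that the following square commutes up to an allowable change of quantum parameters: first apply Theorem~\ref{thm:formtensoriso} to $\widetilde{X}\times W$ and then the blow-down formula for $(X\times W, Z\times W)$, versus first apply the blow-down formula for $(X,Z)$ to the $X$-factor and then Theorem~\ref{thm:formtensoriso}. Once this coherence is in place, the definition passes to the quotient and, by the symmetry and associativity of $\otimes$ on meromorphic connections, equips $\mathbb{N}^{\bigoplus_{\overline{c}} NSQC_{\overline{c}}}/\sim$ with an associative and commutative multiplication, as required.
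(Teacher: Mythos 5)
There is a genuine gap in your verification of well-definedness. The multiplication has to descend to the quotient of the \emph{whole} free monoid $\mathbb{N}^{\bigoplus_{\overline{c}} NSQC_{\overline{c}}}$, whose basis elements are arbitrary framed meromorphic connections with prescribed singularity data; most of these are not of the form $\mathrm{Atom}(W)$ for a smooth projective variety $W$. To check that the product passes to the quotient you must show that tensoring each generating relation (1)--(3) with an \emph{arbitrary} object of some $NSQC_{\overline{c}}$ preserves $\sim$, but your argument for relations (2) and (3) only treats co-factors of the form $\mathrm{Atom}(W)$: you tensor the blow-up relation with $\mathrm{Atom}(W)$, convert via Theorem~\ref{thm:formtensoriso} into atoms of $\widetilde{X}\times W$, $X\times W$, $Z\times W$, and invoke the geometric facts that $\widetilde{X}\times W$ is the blow-up of $X\times W$ along $Z\times W$ and that projectivization commutes with products. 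That geometry is correct, but it is unavailable when the co-factor is a general connection, so it does not establish the compatibility needed for the lemma. The paper's (one-line) proof asserts precisely the stronger, connection-level statement: the relations (1)--(3) are compatible with the tensor product \emph{of connections}, i.e.\ the isomorphisms of quantum $D$-modules underlying the blow-up and Leray--Hirsch relations (Theorem~\ref{thm:quantumbu} and \cite[Theorem~0.2]{lee2013invariance}) remain isomorphisms after tensoring with the identity on any third connection, so the induced identifications of irregular summands persist for arbitrary co-factors. Your route, even where it applies, essentially re-derives Corollary~\ref{cor:atommult} inside the proof of this lemma, rather than working at the level of connections as the paper intends.

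Two further points. First, you yourself flag but do not resolve the coherence of the formal changes of variables (the base rings $\C((Q_E^{-1/(r-1)}))[[Q]]$ of Theorem~\ref{thm:quantumbu} versus $\C[[Q_1,Q_2]]$ of Theorem~\ref{thm:formtensoriso}); in your variety-level approach this square is genuinely needed and would have to be proved, whereas tensoring the isomorphisms of quantum $D$-modules directly with a fixed connection sidesteps it. Second, your appeal to Lemma~\ref{lem:kronecker_prod_spectrum} to place the product of two objects of $NSQC_{\overline{c}_1}$ and $NSQC_{\overline{c}_2}$ in $NSQC_{\overline{c}}$ with $\overline{c}$ the vector of pairwise sums is the right idea, but note that the pairwise sums may fail the normalization imposed on $\overline{c}$ (nonzero entries, not all equal to one), so a word is needed on how the framing/normalization is adjusted; this is a minor point compared with the descent issue above.
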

\begin{proof}
This follows from the fact that each of the atomic relations (1)--(3) is compatible with the tensor product of connections.
\end{proof}
By an abuse of notation, we will also call this operator tensor product. 
\begin{corollary}
\label{cor:atommult}
After a suitable formal change of quantum variables, one has
\[
\mathrm{Atom}(X \times Y) \;\sim\; \mathrm{Atom}(X) \otimes \mathrm{Atom}(Y).
\]
\end{corollary}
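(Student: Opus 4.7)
The plan is to derive the statement directly from Theorem~\ref{thm:formtensoriso} together with the multiplicativity lemma that immediately precedes this corollary. By definition, the atom of a smooth projective variety $Z$ is the class in $\mathbb{N}^{\bigoplus_{\overline{c}} NSQC_{\overline{c}}}/\sim$ of the list of summands appearing in the Hukuhara--Levelt--Turrittin decomposition~\eqref{eq:hltdecomp} of $\mathrm{QDM}(Z)^{\mathrm{la}}$. Since this invariant depends only on the isomorphism class of the underlying meromorphic connection, two quantum D-modules that become identified after a formal change of Novikov and cohomological parameters determine the same atom.

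I would then invoke Theorem~\ref{thm:formtensoriso}, which supplies precisely such an isomorphism
\[
\mathrm{QDM}(X \times Y)^{\mathrm{la}} \;\xrightarrow{\;\sim\;}\; \mathrm{QDM}(X)^{\mathrm{la}} \otimes \mathrm{QDM}(Y)^{\mathrm{la}},
\]
together with the formal change of variables $\tau \mapsto \sum_l \tau_X^l \otimes \tau_Y^l$ that is the ``suitable formal change of quantum variables'' named in the corollary. By the previous paragraph, the atoms of the two sides agree in $\mathbb{N}^{\bigoplus_{\overline{c}} NSQC_{\overline{c}}}/\sim$. What is left is to identify the atom of the right-hand side with $\mathrm{Atom}(X) \otimes \mathrm{Atom}(Y)$, which is the role of the preceding multiplicativity lemma. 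Concretely, tensoring the individual HLT decompositions, a pair of summands $(d + (a/u^2)\,du) \otimes \nabla^a_{\mathrm{reg},X}$ and $(d + (b/u^2)\,du) \otimes \nabla^b_{\mathrm{reg},Y}$ combines into $(d + ((a+b)/u^2)\,du) \otimes (\nabla^a_{\mathrm{reg},X} \otimes \nabla^b_{\mathrm{reg},Y})$, since the tensor product of two regular singular connections at $u=0$ is again regular singular; summing over $(a,b)$ yields the HLT decomposition of the tensor product.

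The main technical point to check is precisely that the decomposition above really represents the multiplication on $\mathbb{N}^{\bigoplus_{\overline{c}} NSQC_{\overline{c}}}/\sim$ introduced in the preceding lemma, and that the formal base change from Theorem~\ref{thm:formtensoriso} is compatible with the atomic equivalence relations (1)--(3). Both reductions are essentially bookkeeping, relying on the Kronecker-sum behaviour of irregular parts under tensor product that already appeared in Lemma~\ref{lem:kronecker_prod_spectrum} and Theorem~\ref{thm:main1_eiglim}.
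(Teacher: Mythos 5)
Your proposal is correct and follows essentially the same route as the paper, which disposes of this corollary with the single sentence that it is a reformulation of Theorem~\ref{thm:formtensoriso} (relying, as you do, on the preceding lemma that the tensor product induces the multiplication on $\mathbb{N}^{\bigoplus_{\overline{c}} NSQC_{\overline{c}}}/\sim$). Your extra bookkeeping --- that the HLT summands combine via $(a,b)\mapsto a+b$ with regular singular parts tensoring --- is exactly the content the paper leaves implicit, so there is no gap and no genuinely different method.
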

\begin{proof}
This is a reformulation of Theorem~\ref{thm:formtensoriso}.
\end{proof}

This multiplicativity property of atoms for products is stated and used in \cite[Section~5.2.6.1]{katzarkov2025birational}, although a detailed verification of the construction is omitted there. 

\subsection{Motivic measures}

Let $K_0(\mathrm{Var})$ denote the Grothendieck ring of quasi-projective varieties over $\mathbb{C}$. By definition, $K_0(\mathrm{Var})$ is the free abelian group generated by isomorphism classes $[X]$ of quasi-projective varieties, modulo the scissor relations
\[
[X] = [X \setminus Z] + [Z],
\]
for any Zariski closed subvariety $Z \subset X$. The multiplication is defined by the Cartesian product
\[
[X] \cdot [Y] := [X \times Y].
\]

A fundamental result due to Looijenga and Bittner asserts that $K_0(\mathrm{Var})$ admits a presentation in terms of smooth projective varieties only.

\begin{proposition}[{\cite{looijenga2000motivic, bittner2004universal}}]
\label{prop:bittnerpres}
The ring $K_0(\mathrm{Var})$ is isomorphic to the ring generated by isomorphism classes of smooth projective varieties, subject to the defining relations
\[
[Y] + [Z] = [X] + [E],
\]
where $Y$ is the blow-up of a smooth projective variety $X$ along a smooth center $Z \subset X$, and $E$ denotes the exceptional divisor.
\end{proposition}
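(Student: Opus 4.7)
The plan is to construct ring homomorphisms in both directions between $K_0(\mathrm{Var})$ and the ring $K^{\mathrm{bl}}$ generated by isomorphism classes of smooth projective varieties modulo the blow-up relations (and disjoint union), and to prove that they are mutually inverse. In what follows I write $[X]^{\mathrm{bl}}$ for the class of $X$ in $K^{\mathrm{bl}}$.

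First I would define the forward map $\phi\colon K^{\mathrm{bl}} \to K_0(\mathrm{Var})$ by $[X]^{\mathrm{bl}} \mapsto [X]$. To check well-definedness, note that the blow-up $Y \to X$ along $Z$ restricts to an isomorphism $Y \setminus E \xrightarrow{\sim} X \setminus Z$, so by the scissor relation
\[
[Y] - [E] = [Y \setminus E] = [X \setminus Z] = [X] - [Z]
\]
in $K_0(\mathrm{Var})$, which is exactly the Bittner relation. Multiplicativity is automatic at the level of generators, since a product of smooth projective varieties is smooth projective.

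The hard direction is constructing an inverse $\psi\colon K_0(\mathrm{Var}) \to K^{\mathrm{bl}}$. Given a quasi-projective variety $V$, Nagata's compactification theorem together with Hironaka's resolution of singularities produces a smooth projective compactification $\overline{V} \supset V$ whose boundary $D = \overline{V} \setminus V = \bigcup_{i=1}^n D_i$ is a simple normal crossings divisor with smooth irreducible components. Writing $D_I = \bigcap_{i \in I} D_i$ (with $D_\emptyset = \overline{V}$), which is smooth and projective, I would set
\[
\psi([V]) := \sum_{I \subseteq \{1,\dots,n\}} (-1)^{|I|} [D_I]^{\mathrm{bl}}.
\]
One then has to verify that this formula is (i) independent of the chosen SNC-compactification, (ii) compatible with the scissor relation, and (iii) multiplicative under products.

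The main obstacle is (i). Here the Weak Factorization Theorem of Abramovich--Karu--Matsuki--W\l odarczyk is essential: any two smooth projective SNC-compactifications of $V$ can be connected by a sequence of blow-ups and blow-downs along smooth centers having simple normal crossings with the boundary and disjoint from $V$. I would show, step by step, that the alternating sum defining $\psi([V])$ is unchanged under each such elementary blow-up, by carefully tracking how the strata $D_I$ of the boundary transform under the blow-up and applying the blow-up relation in $K^{\mathrm{bl}}$ to the resulting contributions; this is a single bookkeeping argument independent of $V$. Once (i) is established, (ii) reduces, after passing to compatible SNC-compactifications of $V$, $Z$, and $V \setminus Z$ together with their strata, to a straightforward combinatorial identity, and (iii) follows from the observation that products of SNC compactifications are again SNC compactifications together with the multiplicative behaviour of the alternating sum.

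Finally, $\phi \circ \psi = \mathrm{id}$ follows by expanding $\psi([V])$ and applying the scissor relation in $K_0(\mathrm{Var})$ iteratively to the SNC stratification of $\overline{V}$; and $\psi \circ \phi = \mathrm{id}$ holds because for a smooth projective $X$ one may take the trivial compactification $\overline{X} = X$ with empty boundary, giving $\psi([X]) = [X]^{\mathrm{bl}}$.
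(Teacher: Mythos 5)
The paper itself gives no argument here: Proposition~\ref{prop:bittnerpres} is quoted from Looijenga and Bittner, and your sketch is essentially a reconstruction of Bittner's proof (easy forward map via the scissor relation; inverse map via alternating sums over the strata of an SNC compactification; independence of choices via the Weak Factorization Theorem). That overall strategy is the right one, so the comparison is really between your sketch and the standard proof.

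There is, however, a genuine gap in how you define $\psi$. You set $\psi([V])=\sum_I(-1)^{|I|}[D_I]^{\mathrm{bl}}$ using ``a smooth projective compactification $\overline{V}\supset V$ with SNC boundary'', for an arbitrary quasi-projective generator $V$ of $K_0(\mathrm{Var})$. Such a compactification exists only when $V$ is smooth: if $V$ is singular, every variety containing $V$ as an open subvariety is singular, and resolution of singularities modifies $V$ itself rather than just adding a boundary. So as written, $\psi$ is undefined on most generators, and this cannot be waved away inside step (ii). Bittner's proof handles this by an intermediate step you are missing: one first proves the ``sm'' presentation of $K_0(\mathrm{Var})$ --- generators the smooth (quasi-projective) varieties, relations the scissor relations $[X]=[X\setminus Z]+[Z]$ with $X$ and $Z$ both smooth --- which itself requires resolution of singularities and an induction on dimension (stratifying a singular variety by smooth pieces and showing the smooth scissor relations generate all relations). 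Only after that does one define the alternating-sum map on smooth generators and check it against the smooth scissor relations; note that without this reduction your step (ii) is not ``a straightforward combinatorial identity'', since in a general scissor relation the closed subvariety $Z$ (or $X\setminus Z$) can be singular even when $X$ is smooth. Two smaller points: the relation $[\emptyset]=0$ has to be imposed in $K^{\mathrm{bl}}$ (Bittner includes it) for the unit and additivity bookkeeping to work, and the invariance of the alternating sum under an admissible blow-up is a nontrivial lemma --- it uses the projective-bundle structure of the exceptional divisor over the center and how each stratum $D_I$ transforms --- rather than a one-line bookkeeping check, though your plan for it is correct in outline.
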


This presentation allows one to pass from the Grothendieck ring to structures defined purely in terms of smooth projective geometry. In particular, Katzarkov--Kontsevich--Pantev--Yu observed in \cite[Section~5.3.2]{katzarkov2025birational} that there exists a natural map
\[
\begin{aligned}
\phi \colon K_0(\mathrm{Var}) & \longrightarrow \mathbb{Z}^{\bigoplus_{\overline{c}} NSQC_{\overline{c}}}/\sim \\
[X] & \longmapsto \mathrm{Atom}(X),
\end{aligned}
\]
where the target is the free abelian group generated by equivalence classes of objects in the groupoids $NSQC_{\overline{c}}$, for all $\overline{c}$, and the quotient is taken with respect to the relations of atoms discussed earlier. The coefficients lie in $\mathbb{Z}$ rather than $\mathbb{N}$, reflecting the fact that $K_0(\mathrm{Var})$ involves virtual classes and hence may contain negative contributions.

The definition of $\phi$ is made using the generators provided by Proposition~\ref{prop:bittnerpres}, namely smooth projective varieties. The map $\phi$ is claimed in \cite{katzarkov2025birational} to be a ring homomorphism. The additivity of $\phi$ is immediate: the blow-up relation for atoms (see Theorem~\ref{thm:quantumbu}) exactly mirrors the blow-up relation appearing in Proposition~\ref{prop:bittnerpres}, ensuring that $\phi$ is a well-defined group homomorphism. 
We now conclude that $\phi$ is indeed a ring homomorphism.

\begin{corollary}
\label{cor:phiring}
Equip the abelian group $\mathbb{Z}^{\bigoplus_{\overline{c}} NSQC_{\overline{c}}}/\sim$ with the multiplication induced by the tensor product of atoms. Then the map $\phi$
is a ring homomorphism.
\end{corollary}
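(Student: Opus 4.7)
The plan is to combine additivity of $\phi$ (established in the paragraph preceding the statement), the Bittner presentation of Proposition~\ref{prop:bittnerpres}, and the multiplicativity on smooth projective varieties proved in Corollary~\ref{cor:atommult}. First I would reduce to generators: by Proposition~\ref{prop:bittnerpres}, $K_0(\mathrm{Var})$ is generated as an abelian group by classes $[X]$ of smooth projective varieties, so every element is a $\mathbb{Z}$-linear combination of such classes. Since $\phi$ is already known to be additive, the multiplication on $\mathbb{Z}^{\bigoplus_{\overline{c}} NSQC_{\overline{c}}}/\sim$ is biadditive (by the lemma preceding Corollary~\ref{cor:atommult}), and the product in $K_0(\mathrm{Var})$ distributes over sums, it suffices to verify
\[
\phi([X]\cdot[Y]) = \phi([X])\otimes\phi([Y])
\]
for $X$ and $Y$ smooth projective.

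For such $X$ and $Y$ the Cartesian product $X\times Y$ is again smooth projective, hence a Bittner generator, and $\phi([X\times Y])=\mathrm{Atom}(X\times Y)$ by definition. Corollary~\ref{cor:atommult} furnishes a formal change of quantum parameters under which $\mathrm{Atom}(X\times Y)\sim\mathrm{Atom}(X)\otimes\mathrm{Atom}(Y)$, and this equivalence is precisely the one imposed in the target. Preservation of the unit follows from $\phi(1)=\phi([\mathrm{Spec}\,\mathbb{C}])=\mathrm{Atom}(\mathrm{pt})$, together with the observation that the quantum connection of a point is the trivial rank-one regular connection, which is the tensor-product identity on the atom side.

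The only subtle point is checking that $\mathbb{Z}$-bilinear extension is compatible with the Bittner relations under multiplication: given a blow-up relation $[Y]+[Z]=[X]+[E]$ and a smooth projective $W$, one must see
\[
[Y\times W]+[Z\times W]=[X\times W]+[E\times W]
\]
as arising from a genuine blow-up. This is where Theorem~\ref{thm:quantumbu} is needed: since $Y\times W\to X\times W$ is the blow-up along the smooth center $Z\times W$ with exceptional divisor $E\times W$, the atom-level blow-up relation applies and the required identity descends to the quotient. I expect this verification to be the main obstacle, but it is in fact immediate from the fact that both the Bittner scissor relation and the atom blow-up relation are functorial under taking products with a smooth projective variety. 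Putting everything together yields that $\phi$ is a ring homomorphism, proving the corollary.
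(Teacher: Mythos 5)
Your proposal is correct and follows essentially the same route as the paper: additivity of $\phi$ via the compatibility of the atom blow-up relation with the Bittner presentation, and multiplicativity checked on smooth projective generators via Corollary~\ref{cor:atommult}, then extended $\mathbb{Z}$-bilinearly. The extra verifications you include (unit preservation, and compatibility of products with the blow-up relation via $\mathrm{Bl}_{Z\times W}(X\times W)\cong \mathrm{Bl}_{Z}(X)\times W$) are correct but go beyond the paper's terse argument and are not strictly needed once $\phi$ is known to be a well-defined group homomorphism and the target multiplication is biadditive.
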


\begin{proof}
The additivity of $\phi$ follows from the compatibility of the blow-up relation of atoms with Proposition~\ref{prop:bittnerpres}. Multiplicativity is guaranteed by Corollary~\ref{cor:atommult}, which identifies $\mathrm{Atom}(X \times Y)$ with $\mathrm{Atom}(X) \otimes \mathrm{Atom}(Y)$ after a suitable formal change of variables. Hence $\phi$ preserves both the addition and multiplication, proving that it is a ring homomorphism.
\end{proof}

\begin{remark}
The map $\phi$ may be interpreted as a new kind of \emph{motivic measure}, in the sense of Gillet--Soulé and Looijenga. Classical motivic measures, such as the Hodge--Deligne polynomial, send a variety $X$ to invariants of its cohomology. In contrast, $\phi$ records the structure of the quantum connection of $X$, via its decomposition into atoms. Thus $\phi$ enriches the motivic formalism with quantum invariants: it factors birational geometry through the Grothendieck ring while retaining an appropriate amount of GW-theoretic information encoded in the quantum connection.
\end{remark}

\begin{example}
Consider the projective line $\mathbb{P}^1$. Classically, the Hodge--Deligne polynomial assigns
\[
E(\mathbb{P}^1; u,v) = 1 + uv,
\]
reflecting that $H^0(\mathbb{P}^1) \cong \mathbb{C}$ and $H^2(\mathbb{P}^1) \cong \mathbb{C}(-1)$. In the quantum setting, the quantum connection of $\mathbb{P}^1$ splits into two rank-one summands, corresponding to eigenvalues $\pm \sqrt{q}$ of quantum multiplication by $c_1(T\mathbb{P}^1)$. Hence,
\[
\phi([\mathbb{P}^1]) = \mathrm{Atom}(\mathbb{P}^1) = [\mathcal{A}_+] + [\mathcal{A}_-],
\]
where $\mathcal{A}_\pm \in NSQC_{\overline{c}}$ are the two rank-one framed connections determined by the exponential factors $e^{\pm \sqrt{q}}$. 

\end{example}

\bibliographystyle{amsplain}
\bibliography{main}

\end{document}